\newcommand \bydef {\stackrel{\mbox{\scriptsize def}}{=}}
\theoremstyle{plain}
\newtheorem{theorem}{Theorem}[section]
\newtheorem{proposition}[theorem]{Proposition}
\newtheorem{corollary}[theorem]{Corollary}
\newtheorem{lemma}[theorem]{Lemma}
\newtheorem*{Theorem}{Theorem}
\newtheorem*{Proposition}{Proposition}
\theoremstyle{remark}
\newtheorem{remark}[theorem]{Remark}
\theoremstyle{definition}
\numberwithin{equation}{section}
\newcommand{\splitatcommas}[1]{%
  \begingroup
  \ifnum\mathcode`,="8000
  \else
  \begingroup\lccode`~=`, \lowercase{\endgroup
    \edef~{\mathchar\the\mathcode`, \penalty0 \noexpand\hspace{0pt plus 1em}}%
  }\mathcode`,="8000
  \fi
  #1%
  \endgroup
}
\newcommand{\al}{\alpha}
\newcommand{\ga}{\gamma}
\newcommand{\Ga}{\Gamma}
\newcommand{\bC}{\mathbb C}
\newcommand{\bCP}{\mathbb {CP}}
\newcommand{\bR}{\mathbb R}
\newcommand{\bZ}{\mathbb Z}
\newcommand{\ZZ}{\mathcal Z}
\newcommand{\A}{\mathcal A}
\newcommand{\cG}{\mathcal G}
\newcommand{\eps}{\epsilon}
\newcommand{\RR}{\mathcal R}
\newcommand{\si}{\sigma}
\newcommand \F {\mathfrak F}
\newcommand \prt {\partial}
\newcommand \DD {\mathfrak D}
\newcommand \AD {\mathcal{AD}}
\newcommand \zz {\mathbf z}
\newcommand \bzz {\bar{\mathbf z}}
\newcommand \mR {\mathfrak R}
\newcommand \mP {\mathfrak P}
\newcommand \wMM{\mathcal M}
\newcommand \mO{\mathcal O}
\newcommand{\BM}{\phi}
\newcommand {\fM}{\mathfrak M} 
\newcommand{\TT}{\mathcal T}
\newcommand{\DT}[1]{#1 \dots #1}
\def \lmod#1\rmod {\vphantom{#1}\left|\smash{#1}\right|}
\newcommand{\name}[1]{\operatorname{\mathrm #1}\nolimits}
\def \id {\name{id}}
\newcommand{\widevec}[1] {\overset{\longrightarrow}{#1}}
\begin{document}

\title[Algebraic relations between moments of plane polygons]
{Algebraic relations between harmonic and anti-harmonic moments of plane polygons}

\author[Yu.\,Burman]{Yurii Burman}
\address{National Research University Higher School of Economics
  (NRU-HSE), Russia, and Independent University of Moscow,
  B.~Vlassievskii per., 11, Moscow, 119002, Russia}
\email{burman@mccme.ru}

\author [R.\,Fr\"oberg]{Ralf  Fr\"oberg}
\address{Department of Mathematics, Stockholm University, SE-106 91
  Stockholm, Sweden}
\email{ralff@math.su.se}

\author[B.\,Shapiro]{Boris Shapiro}
\address{Department of Mathematics, Stockholm University, SE-106 91
  Stockholm, Sweden}
\email{shapiro@math.su.se}

\dedicatory{To Isaac Schoenberg and Theodore Motzkin whose insights
  laid the foundations of this topic}

\date{\today}

\keywords{harmonic and anti-harmonic moments, polygonal measures}
\subjclass[2010]{Primary 44A60; Secondary 31B20}

\begin{abstract}  
  In this paper we describe the algebraic relations satisfied by the
  harmonic and anti-harmonic moments of simply connected, but not
  necessarily convex planar polygons with a given number of vertices.
\end{abstract}
  
\maketitle 

\section{Introduction and main results}\label{s1}

\subsection{Basic notions and background}

Let $\mu$ be a a finite compactly supported Borel measure in the plane
$\bR^2 = \bC$. For  $j = 0, 1, \dots$, its $j$-th {\em harmonic moment} is a
complex number given by:
\begin{equation*}
  m_j(\mu) \bydef  \int_{\bC} z^j\,d\mu(z).
\end{equation*}
Analogously, its $j$-th {\em anti-harmonic moment} is given by:
\begin{equation*}
  \bar m_j(\mu) \bydef  \int_{\bC} \bar z^j\,d\mu(z).
\end{equation*}
A function 
\begin{equation*}
  \mathfrak u_\mu(z) \bydef \int_\bC \ln \lmod z-\xi\rmod\, d\mu(\xi) 
\end{equation*}
is called the {\em logarithmic potential} of $\mu$. It is harmonic
outside the support of $\mu$ and well-defined almost everywhere in
$\bC$. The germ of $ \mathfrak u_\mu(z) $ at $\infty$ is determined by
the sequence of harmonic moments $\{m_j(\mu)\}_{j=0}^\infty$: the
Taylor expansion at $\infty$ of the {\em Cauchy transform} of $\mu$
defined as
\begin{equation*}
  \mathfrak C_\mu(z)\bydef \int_\bC \frac{d\mu(\xi)}{z-\xi} =
  \frac{\mathfrak \prt \mathfrak u_\mu(z)}{\prt z}, 
\end{equation*}
is
\begin{equation*}
  \mathfrak C_\mu(z) = \frac{m_0(\mu)}{z} + \frac{m_1(\mu)}{z^2} +
  \frac{m_2(\mu)}{z^3} + \dots .
\end{equation*}

The problem of the recovery of a measure from its logarithmic
potential at $\infty$ (alias {\em ``the inverse problem in logarithmic
  potential theory''}) is a classical area of potential analysis going
back to the early 1920s and still quite active.  One of its milestones
is the fundamental paper \cite{Nov}, in which P.S.\,Novikov proved
that Lebesgue measures of two different star-shaped 
domains cannot have the same logarithmic potential near $\infty$. In
other words, sequences of harmonic moments of Lebesgue measures of two
star-shaped plane domains cannot coincide.

For non-star-shaped domains a similar statement is false: see e.g.
\cite[p.~333]{broSt} for examples of pairs of non-convex polygons
having the same logarithmic potentials near $\infty$.

In this century the problem reappeared in mathematical physics in
connection with integrable systems and the Hele-Shaw flow, see
e.g.\ \cite {MWZ,2DToda}. In particular, in \cite{2DToda} S.\,Natanzon
and A.\,Zabrodin extended Novikov's result showing that harmonic
moments can be used as ``coordinates'' on the set of all star-shaped
domains. Their results imply that for any sequence of numbers
$\alpha_0, \alpha_1 \dots$, there exists a star-shaped domain whose
Lebesgue measure $\mu$ satisfies the conditions $m_j(\mu) = \alpha_j$,
$j = 0, 1, \dots$.

This claim is no longer true if one considers moments of Lebesgue
measures of polygons with a fixed number $n$ of vertices. The harmonic
moments (and anti-harmonic) moments of such polygons are algebraically
dependent and the main goal of the present article is to describe
these dependencies.

\subsection{The main object: polygonal measures, an explicit formula and complexification}

A plane polygon $P \subset \bC$ is determined by its sequence of
vertices $z_1 \DT, z_n \in \bC$ ordered counterclockwise, but not
every sequence of $n$ points in $\bC$ is a sequence of vertices for
some simply connected polygon. It is natural to generalize the notion
of the Lebesgue measure of a polygon as follows.

Let ${\bf a}=(a_1 \DT, a_n),\; a_j \in \bC^2$ be a sequence of points
with $a_j = (x_j, y_j)$, $j = 1 \DT, n$.  Instead of $x_j$ and $y_j$
we are going to use more convenient coordinates $z_j = x_j+ iy_j$ and
$\bar z_j = x_j - iy_j$. For brevity, denote $\zz \bydef (z_1 \DT,
z_n)$ and $\bzz \bydef (\bar z_1 \DT, \bar z_n)$.

If all $a_j \in \bR^2 \subset \bC^2$ (that is, all $x_j$ and $y_j$ are
real) then every $\bar z_j$ is indeed the complex conjugate of $z_j$;
if we identify $\bR^2$ with $\bC$ as $(x,y) \leftrightarrow x+iy$ then
$a_j$ becomes $z_j$. We will call such situation ``the case of real
vertices''. In general, though, $\zz$ and $\bzz$ are $n$-tuples of
independent complex variables.

Define an {\em oriented closed polygonal curve} $ \Ga_{\bf a}$ by
\begin{equation*}
  \Ga_{\bf a} \bydef \widevec{[a_1,a_2]} \DT\cup \widevec{[a_{n-1},a_n]}
  \cup \widevec{[a_n,a_1]}.
\end{equation*}
Fix an auxiliary convex polygon $P_* \subset \bR^2$ with the vertices
$w_1 \DT, w_n$ ordered counterclockwise, and let $\TT$ be its {\em
  triangulation}, i.e.\ a set of diagonals of $P_*$ having no common
internal points and cutting $P_*$ into triangles. Let $F_{\bf a,\TT}:
P_* \to \bC^2$ be the map sending every $w_j$ to $a_j$ and affine on
every triangle of the triangulation. (It is easy to show that $F_{\bf
  a,\TT}$ exists and is unique and continuous.)

The image $\Delta_{\bf a,\TT} = F_{\bf a,\TT}(P_*) \subset \bC^2$ is a
polygonal disk in $\bC^2$ bounded by $\Ga_a$. The disk $\Delta_{\bf
  a,\TT} \subset \bC^2$ is piecewise immersed (though $F_{\bf a,\TT}$
is not always an immersion); for generic $\bf a$ this disk is
embedded.

The disk $\Delta_{\bf a,\TT}$ supports the measure $\mu_{\bf a,\TT} =
(F_{\bf a,\TT})_* dxdy$ which is the direct image of the Lebesgue
measure on $P$ under $F_{\bf a,\TT}$; we call $\mu_{\bf a,\TT}$ a {\em
  polygonal measure}. The disk $\Delta_{\bf a,\TT}$ itself depends on
the triangulation $\TT$, but certain integrals with respect to
$\mu_{\bf a,\TT}$ do not.

\def \labelenumi {(\theenumi)}
\def \theenumi {\roman{enumi}}
\makeatletter
\def \p@enumi\csname #1\endcsname{(\theenumi)}
\makeatother
\begin{theorem}\label{Th:TriaIndep}
  \begin{enumerate}
  \item\label{It:TriaIndep} Let $h: \bC^2 \to \bC$ be a holomorphic
    function of two variables. Then the integral $\int_{\Delta_{\bf
        a,\TT}} h\, d\mu_{\bf a,\TT}$ does not depend on the
    triangulation $\TT$ and is equal to $\int_{\Ga_{\bf a}} \omega$,
    where $\omega$ is any $1$-form such that $d\omega = -\frac{i}{2}
    h\,dz \wedge d\bar z$.
  \item\label{It:Real} If the vertices $a_1 \DT, a_n$ are real then
    $\mu_{\bf a,\TT}$ is independent of $\TT$. It is supported on a
    compact subset of $\bR^2$ and its density at a point $q \in \bR^2
    \setminus \Ga_{\bf a}$ equals the linking number of the $1$-cycle
    $\Ga_{\bf a} \subset \bR^2$ with the $0$-cycle $q - \infty$. In
    particular, if $a_1 \DT, a_n \in \bR^2$ are vertices of a simply
    connected polygon listed counterclockwise then $\mu_{a,\TT}$ is
    the Lebesgue measure of this polygon.
  \end{enumerate}
\end{theorem}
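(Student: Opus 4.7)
The plan is to deduce both parts from Stokes' theorem applied on the triangulated reference polygon $P_*$.

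For part~\ref{It:TriaIndep}, on each triangle $T\in\TT$ the map $F = F_{\bf a,\TT}$ is $\bR$-affine, so $F^*(dz\wedge d\bar z)$ is a constant multiple of the area form on $T$; in particular $F^*\bigl(-\tfrac{i}{2} h\, dz\wedge d\bar z\bigr)$ is a bounded $2$-form on $P_*$. Interpreting the pushforward current $\mu_{\bf a,\TT} = F_*(dxdy)$ as carrying the orientation of $F|_T$ on each triangle, one obtains the identification
\[
\int_{\Delta_{\bf a,\TT}} h\, d\mu_{\bf a,\TT} \;=\; \int_{P_*} F^*\bigl(-\tfrac{i}{2}\, h\, dz\wedge d\bar z\bigr) \;=\; \int_{P_*} d(F^*\omega).
\]
Applying Stokes on each triangle and summing, every internal diagonal of $\TT$ is shared by two triangles inducing opposite orientations on it, and $F$ is continuous across the diagonal, so the contributions of $F^*\omega$ cancel pairwise on interior edges. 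Only the image of $\partial P_*$ survives, and it is $\Ga_{\bf a}$ by construction, yielding $\int_{\Ga_{\bf a}}\omega$. This is manifestly independent of $\TT$ and insensitive to the choice of $\omega$, because any two valid primitives differ by a closed form on the contractible space $\bC^2$, whose integral around the closed curve $\Ga_{\bf a}$ vanishes.

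For part~\ref{It:Real}, real vertices force $F(P_*)\subset\bR^2$, so the support of $\mu_{\bf a,\TT}$ lies in the compact set $F(P_*)$. At a regular value $q\in\bR^2\setminus\Ga_{\bf a}$, a change of variables on each triangle expresses the signed density of $\mu_{\bf a,\TT}$ at $q$ as
\[
\rho(q)\;=\;\sum_{p\in F^{-1}(q)}\operatorname{sign}(J_F(p)),
\]
i.e.\ the topological degree of $F$ at $q$. By the classical degree-winding theorem for continuous maps of a planar disk, this equals the winding number of $F|_{\partial P_*}=\Ga_{\bf a}$ around $q$, which is by definition the linking number of the $1$-cycle $\Ga_{\bf a}$ with the $0$-cycle $q-\infty$. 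Since $\rho$ depends only on $\bf a$, the measure is $\TT$-independent. Finally, when $a_1,\dots,a_n$ list the vertices of a simply connected polygon counterclockwise, this winding number is $1$ inside the polygon and $0$ outside, so $\mu_{\bf a,\TT}$ coincides with the Lebesgue measure of the polygon.

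The main technical subtlety is the first equality in the display of part~\ref{It:TriaIndep}: the pushforward must be interpreted as a current respecting the orientation of $F|_T$ on each triangle (which may reverse orientation on some triangles) rather than as an unsigned measure, so that it matches the oriented pulled-back $2$-form. Once this orientation bookkeeping is done locally on triangles, Stokes and the degree-winding correspondence carry the rest of the proof.
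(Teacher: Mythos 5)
Your proof of assertion (i) is correct and essentially the paper's argument: Stokes' theorem applied triangle by triangle, with the contributions of interior diagonals cancelling in pairs and only the boundary $\Ga_{\bf a}$ surviving; you add the (worthwhile) observation that the answer is insensitive to the choice of primitive $\omega$ because $\bC^2$ is contractible, and you correctly flag the orientation bookkeeping needed to make the pushforward ``measure'' match the oriented pulled-back $2$-form. For assertion (ii) you take a genuinely different route from the paper: you identify the density at a regular value $q$ with the local degree $\sum_{p \in F^{-1}(q)} \operatorname{sign}(J_F(p))$ and invoke the degree--winding-number theorem for continuous maps of a disk, which immediately gives both the linking-number formula and the independence of $\TT$ (since the degree depends only on $F|_{\partial P_*} = \Ga_{\bf a}$). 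The paper instead establishes the $n=3$ case by inspection and then inducts on $n$ using the additivity of the polygonal measure (its Lemma on splitting along a diagonal) together with the additivity of linking numbers under the decomposition $\Ga_{\bf a} = \Ga_{\bf a'} + \Ga_{\bf a''}$. Your degree-theoretic argument is more self-contained and handles all triangulations at once, at the price of citing a standard topological theorem; the paper's induction is more elementary and reuses its additivity lemma, which it needs elsewhere anyway. Both are valid.
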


The theorem follows easily from the Stokes' theorem; see Section
\ref{Sec:HarmOnly} for a detailed proof. The last claim in assertion
\ref{It:Real} explains the term ``polygonal measure''.

\begin{corollary}[of assertion \ref{It:TriaIndep}]\label{Cr:MomentIndep}
  The harmonic moment $m_j(\mu_{{\bf a},\TT})$ of the polygonal
  measure does not depend on the triangulation $\TT$.
\end{corollary}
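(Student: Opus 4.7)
The plan is to apply assertion \ref{It:TriaIndep} of Theorem~\ref{Th:TriaIndep} directly, since the $j$-th harmonic moment is by definition the integral of a holomorphic function. Indeed, extending $z^j$ to a function on $\bC^2$ with coordinates $(z,\bar z)$ (independent variables in the general setting introduced in \S1.2) gives the function $h(z,\bar z) \bydef z^j$, which depends only on the first coordinate and is therefore holomorphic on $\bC^2$.

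By definition, $m_j(\mu_{\bf a,\TT}) = \int_{\Delta_{\bf a,\TT}} z^j \, d\mu_{\bf a,\TT} = \int_{\Delta_{\bf a,\TT}} h \, d\mu_{\bf a,\TT}$. Assertion \ref{It:TriaIndep} of Theorem~\ref{Th:TriaIndep} then asserts that this integral equals $\int_{\Ga_{\bf a}} \omega$ for any 1-form $\omega$ on $\bC^2$ with $d\omega = -\frac{i}{2} z^j\, dz \wedge d\bar z$. Such forms exist globally; one convenient choice is $\omega = -\frac{i}{2(j+1)}\, z^{j+1}\, d\bar z$, whose exterior derivative is precisely $-\frac{i}{2}\, z^j\, dz \wedge d\bar z$.

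Since the contour $\Ga_{\bf a}$ is determined solely by the sequence of vertices $\bf a$ and carries no dependence on the auxiliary data $\TT$, the right-hand side $\int_{\Ga_{\bf a}} \omega$ is manifestly independent of the triangulation. This establishes the claim. There is no real obstacle: the content has been done in Theorem~\ref{Th:TriaIndep}, and the corollary is merely the specialization $h(z,\bar z) = z^j$. (As a byproduct, one obtains the explicit boundary expression $m_j(\mu_{\bf a,\TT}) = -\frac{i}{2(j+1)} \int_{\Ga_{\bf a}} z^{j+1}\, d\bar z$, which will presumably be exploited in the sequel.)
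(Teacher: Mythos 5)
Your proof is correct and is exactly what the paper intends: the corollary is stated as an immediate consequence of Theorem~\ref{Th:TriaIndep}(\ref{It:TriaIndep}) with $h(z,\bar z)=z^j$, which is holomorphic since it does not involve $\bar z$, and the boundary integral over $\Ga_{\bf a}$ depends only on $\bf a$. Your explicit choice $\omega=-\tfrac{i}{2(j+1)}z^{j+1}\,d\bar z$ does satisfy $d\omega=-\tfrac{i}{2}z^j\,dz\wedge d\bar z$, so nothing is missing.
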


The following lemma is straightforward:
\begin{lemma}\label{Lm:Add}
  Assume that the triangulation $\TT$ contains the diagonal $(1,m)$
  which divides $\TT$ into two parts, $\TT'$ and $\TT''$. If $a = (a_1
  \DT, a_n)$, $a' = (a_1 \DT, a_m)$ and $a'' = (a_1, a_m, a_{m+1} \DT,
  a_n)$, then $\Delta_{\bf a,\TT} = \Delta_{\bf a',\TT'} \cup
  \Delta_{\bf a'',\TT''}$ and $\mu_{\bf a,\TT} = \mu_{\bf a',\TT'} +
  \mu_{\bf a'',\TT''}$.
\end{lemma}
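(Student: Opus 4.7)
\medskip

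\noindent\textbf{Proof proposal.} The plan is to reduce the additivity of both $\Delta_{\bf a,\TT}$ and $\mu_{\bf a,\TT}$ to the corresponding additive decomposition of the source polygon $P_*$ with its Lebesgue measure, by exploiting the uniqueness of the map $F_{\bf a,\TT}$.

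First I would observe that the diagonal $(w_1, w_m)$ of $P_*$ cuts it into two convex sub-polygons: $P_*'$ with vertices $w_1, w_2 \DT, w_m$ and $P_*''$ with vertices $w_1, w_m, w_{m+1} \DT, w_n$, both listed counterclockwise. By hypothesis the triangulation $\TT$ contains $(1,m)$, so its triangles partition into those lying inside $P_*'$ (forming $\TT'$) and those lying inside $P_*''$ (forming $\TT''$); in particular $\TT'$ and $\TT''$ are triangulations of $P_*'$ and $P_*''$ respectively.

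Next I would compare the maps. The restriction $F_{\bf a,\TT}\restriction_{P_*'}$ sends $w_j \mapsto a_j$ for $j = 1 \DT, m$ and is affine on each triangle of $\TT'$; by the uniqueness part of the construction of $F_{\bf \cdot,\cdot}$ (stated just before Theorem \ref{Th:TriaIndep}) this restriction coincides with $F_{\bf a',\TT'}$. Likewise $F_{\bf a,\TT}\restriction_{P_*''} = F_{\bf a'',\TT''}$. Taking images gives
\begin{equation*}
  \Delta_{\bf a,\TT} = F_{\bf a,\TT}(P_*' \cup P_*'')
  = F_{\bf a',\TT'}(P_*') \cup F_{\bf a'',\TT''}(P_*'')
  = \Delta_{\bf a',\TT'} \cup \Delta_{\bf a'',\TT''}.
\end{equation*}

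Finally I would check the measure statement. The sets $P_*'$ and $P_*''$ cover $P_*$ and overlap only on the segment $[w_1, w_m]$, which has Lebesgue measure zero. Therefore $dx\,dy \restriction_{P_*} = dx\,dy \restriction_{P_*'} + dx\,dy \restriction_{P_*''}$ as measures, and pushing forward by $F_{\bf a,\TT}$ (which agrees with $F_{\bf a',\TT'}$ on $P_*'$ and with $F_{\bf a'',\TT''}$ on $P_*''$) yields $\mu_{\bf a,\TT} = \mu_{\bf a',\TT'} + \mu_{\bf a'',\TT''}$. There is no real obstacle here; the only mild subtlety is invoking the uniqueness of the piecewise-affine interpolation to identify the restrictions of $F_{\bf a,\TT}$ with the maps built from the sub-data $({\bf a}',\TT')$ and $({\bf a}'',\TT'')$, which is precisely what makes the lemma ``straightforward''.
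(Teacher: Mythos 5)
Your proof is correct and is precisely the routine argument the paper has in mind when it declares the lemma ``straightforward'' and omits any proof: split $P_*$ along the diagonal $(w_1,w_m)$, identify the restrictions of $F_{\bf a,\TT}$ with $F_{\bf a',\TT'}$ and $F_{\bf a'',\TT''}$ via uniqueness of the piecewise-affine interpolation, and note that the overlap $[w_1,w_m]$ has Lebesgue measure zero so the pushforward measures add. Nothing is missing.
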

\begin{corollary}\label{Cr:SumTria}
  If $\TT$ consists of the edges $(1,3), (1,4) \DT, (1,n-1)$, then
  \begin{equation*}
    \mu_{\bf a,\TT} = \mu_{a_1a_2a_3} + \mu_{a_1a_3a_4} \DT+ \mu_{a_1a_{n-1}a_n}.
  \end{equation*}
\end{corollary}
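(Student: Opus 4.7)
The plan is a straightforward induction on $n$, peeling one triangle off the fan at a time via Lemma \ref{Lm:Add}.

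The base case $n=3$ is trivial: the triangulation $\TT$ is empty, and the polygon is itself the triangle $a_1a_2a_3$, so $\mu_{\bf a,\TT} = \mu_{a_1a_2a_3}$.

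For the inductive step, assuming the claim for polygons with $n-1$ vertices, I apply Lemma \ref{Lm:Add} with $m = 3$. The diagonal $(1,3)$ belongs to $\TT$ and splits it into two parts: $\TT'$ is empty (since it contains only the three vertices $a_1,a_2,a_3$ and no further diagonals), and $\TT''$ is the fan triangulation consisting of the diagonals $(1,4),\dots,(1,n-1)$ on the vertex set $(a_1, a_3, a_4, \dots, a_n)$. Lemma \ref{Lm:Add} then gives
\begin{equation*}
  \mu_{\bf a,\TT} = \mu_{a_1 a_2 a_3} + \mu_{(a_1,a_3,a_4,\dots,a_n),\TT''}.
\end{equation*}
The second summand is the polygonal measure of a polygon with $n-1$ vertices, equipped with its fan triangulation from vertex $a_1$. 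The inductive hypothesis expresses it as $\mu_{a_1a_3a_4} + \mu_{a_1a_4a_5} + \dots + \mu_{a_1a_{n-1}a_n}$, and combining with the first term yields the desired formula.

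There is essentially no obstacle here; the only thing to check carefully is that after invoking Lemma \ref{Lm:Add} the residual triangulation $\TT''$ really is the fan triangulation from $a_1$ on the reduced vertex sequence $(a_1,a_3,a_4,\dots,a_n)$, which is immediate from the definition of $\TT$.
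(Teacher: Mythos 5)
Your proof is correct and follows exactly the route the paper intends: the corollary is stated as an immediate consequence of Lemma \ref{Lm:Add}, obtained by iterating (or inducting with) that lemma along the fan diagonals from $a_1$. Your careful identification of the residual triangulation $\TT''$ as the fan on $(a_1,a_3,\dots,a_n)$ is the only point needing verification, and you handle it properly.
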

(The right-hand side of the latter formula contains measures supported
on triangles, so there is no need to specify a triangulation). Notice
that if the vertices are real then, up to a sign, $\mu_{a_1 a_k
  a_{k+1}}$ is the Lebesgue measure of a (possibly degenerate)
triangle with the vertices $a_1$, $a_k$ and $a_{k+1}$. If this
triangle is degenerate then $\mu_{a_1 a_k a_{k+1}}$ is zero, otherwise
the sign is taken to be $+$ if the triangle is oriented
counterclockwise, and $-$ if clockwise.

Take again ${\bf a} = (a_1 \DT, a_n), a_j = (x_j,y_j) \in \bC^2$; $z_j
= x_j + iy_j, \bar z_j = x_j - iy_j$. Obviously, $z_j$ and $\bar z_j$
determine $a_j$ since $x_j = (z_j + \bar z_j)/2$ and $y_j = (z_j -
\bar z_j)/2i$. Set
\begin{equation*}
  \nu_k(\zz,\bzz) \bydef \binom{k}{2} m_{k-2}(\mu_{\bf
    a,\TT})\quad\text{and}\quad \bar \nu_k(\zz,\bzz) = \binom{k}{2}
  \bar m_{k-2}(\mu_{\bf a,\TT}).
\end{equation*}
In particular, $\nu_0 = \nu_1 = \bar \nu_0=\bar \nu_1=0$ for all
$(\zz, \bzz)$. By Corollary \ref{Cr:MomentIndep} both sides of the
equalities are independent of $\TT$.

\begin{remark}
  The index shift $(k-2) \mapsto k$ used above is convenient since the
  normalized moments $\nu_k$ and $\bar \nu_k$ are homogeneous of
  degree $k$ with respect to the dilatations on $\bC^2$. In other
  words, $\nu_k(t\zz,t\bzz) = t^k \nu_k(\zz,\bzz)$ where $t\zz \bydef
  (tz_1 \DT, tz_n)$.
\end{remark}

The following theorem provides explicit formulas for $\nu_k(\zz,\bzz)$
and $\bar \nu_k(\zz,\bzz)$.

\begin{theorem}\label{Th:Formula}
  For any positive integer $k\ge 2$ one has
  \begin{multline}\label{eq:expl}
    \nu_k(\zz,\bzz) = \frac{i}{4} \sum_{j=1}^n(\bar z_j-\bar
    z_{j+1})\frac{z_j^k-z_{j+1}^k}{z_j-z_{j+1}}\\
    = \frac{i}{4} \sum_{j=1}^n(\bar z_j-\bar
    z_{j+1})(z_j^{k-1}+z_j^{k-2}z_{j+1}+\dots+z_{j+1}^{k-1}).
  \end{multline}
  The anti-harmonic moment $\bar \nu_k(\zz,\bzz)$ is given by the same
  formula \eqref{eq:expl} with $\zz$ and $\bzz$ interchanged. 
\end{theorem}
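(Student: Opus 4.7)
The plan is to apply Theorem \ref{Th:TriaIndep}\ref{It:TriaIndep} to the holomorphic function $h(z,\bar z)=z^{k-2}$ (respectively $\bar z^{k-2}$), choose a convenient primitive 1-form, and then evaluate the resulting line integral segment by segment along $\Ga_{\bf a}$.

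First, I would pick the primitive
\begin{equation*}
  \omega = -\frac{i}{2(k-1)}\, z^{k-1}\, d\bar z,
\end{equation*}
which satisfies $d\omega = -\frac{i}{2(k-1)} (k-1) z^{k-2}\, dz \wedge d\bar z \cdot(-1) \cdot(-1)= -\frac{i}{2} z^{k-2}\, dz \wedge d\bar z$ (the sign works out from $d\bar z\wedge dz = -dz\wedge d\bar z$). By Theorem \ref{Th:TriaIndep}\ref{It:TriaIndep},
\begin{equation*}
  m_{k-2}(\mu_{\bf a,\TT}) = \int_{\Ga_{\bf a}} \omega = -\frac{i}{2(k-1)} \sum_{j=1}^{n} \int_{[a_j,a_{j+1}]} z^{k-1}\, d\bar z,
\end{equation*}
with the convention $a_{n+1}=a_1$.

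The second step is the one-variable computation on each segment. Parametrizing $[a_j,a_{j+1}]$ by $z = z_j + t(z_{j+1}-z_j)$, $\bar z = \bar z_j + t(\bar z_{j+1}-\bar z_j)$, $t\in[0,1]$, the inner integral becomes
\begin{equation*}
  \int_{[a_j,a_{j+1}]} z^{k-1}\, d\bar z
  = (\bar z_{j+1}-\bar z_j)\int_0^1 \bigl(z_j+t(z_{j+1}-z_j)\bigr)^{k-1} dt
  = \frac{(\bar z_{j+1}-\bar z_j)\,(z_{j+1}^k - z_j^k)}{k(z_{j+1}-z_j)}.
\end{equation*}
Plugging this back and flipping signs in numerator and denominator yields
\begin{equation*}
  m_{k-2}(\mu_{\bf a,\TT}) = \frac{i}{2k(k-1)} \sum_{j=1}^{n} (\bar z_j-\bar z_{j+1}) \frac{z_j^k - z_{j+1}^k}{z_j - z_{j+1}}.
\end{equation*}
Multiplying by $\binom{k}{2}=\frac{k(k-1)}{2}$ produces exactly the formula \eqref{eq:expl} for $\nu_k$; the geometric-series expansion $\frac{z_j^k - z_{j+1}^k}{z_j - z_{j+1}} = z_j^{k-1}+z_j^{k-2}z_{j+1}+\dots+z_{j+1}^{k-1}$ gives the second line.

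For $\bar\nu_k$, I would run the symmetric argument with $h(z,\bar z)=\bar z^{k-2}$ and primitive $\omega' = \frac{i}{2(k-1)}\bar z^{k-1}\,dz$; the same segment-wise computation, now parametrizing $\bar z$ affinely, produces the formula with $\zz$ and $\bzz$ interchanged. There is no serious obstacle: the only thing to verify carefully is the sign bookkeeping in step one (the $-\tfrac{i}{2}$ in Theorem \ref{Th:TriaIndep} combined with $d\bar z \wedge dz = -dz \wedge d\bar z$), since a sign error there propagates through the whole calculation; everything else is routine integration of a polynomial along a line segment.
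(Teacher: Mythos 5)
Your strategy for the harmonic moments is exactly the one used in the paper: apply assertion \ref{It:TriaIndep} of Theorem \ref{Th:TriaIndep} to $h$ proportional to $z^{k-2}$, choose a primitive proportional to $z^{k-1}\,d\bar z$, and integrate segment by segment via the affine parametrization. Your constant- and sign-tracking in that half is correct and in fact cleaner than the printed proof, which misprints the primitive as $kz^{k-1}\,dz$ (a closed form) and silently absorbs the orientation sign coming from rewriting $(\bar z_{j+1}-\bar z_j)\tfrac{z_{j+1}^k-z_j^k}{z_{j+1}-z_j}$ as $-(\bar z_j-\bar z_{j+1})\tfrac{z_j^k-z_{j+1}^k}{z_j-z_{j+1}}$; your chain of equalities lands precisely on \eqref{eq:expl}.

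The one genuine gap is the final sentence about $\bar\nu_k$. The interchange $\zz\leftrightarrow\bzz$ is \emph{not} a symmetry of the setup, because the $2$-form $-\tfrac{i}{2}\,dz\wedge d\bar z$ appearing in Theorem \ref{Th:TriaIndep} changes sign under it, so ``running the symmetric argument'' cannot be taken for granted. If you carry out your own recipe with $\omega'=\tfrac{i}{2(k-1)}\bar z^{k-1}\,dz$ (which does satisfy $d\omega'=-\tfrac{i}{2}\bar z^{k-2}\,dz\wedge d\bar z$), the segment integral is $\tfrac{(z_{j+1}-z_j)(\bar z_{j+1}^k-\bar z_j^k)}{k(\bar z_{j+1}-\bar z_j)}$, and the same reversal $j+1\leftrightarrow j$ that produced a compensating $-1$ in the harmonic case now produces an \emph{uncompensated} one: you obtain $\bar\nu_k=-\tfrac{i}{4}\sum_j(z_j-z_{j+1})\tfrac{\bar z_j^k-\bar z_{j+1}^k}{\bar z_j-\bar z_{j+1}}$, the negative of ``\eqref{eq:expl} with $\zz$ and $\bzz$ interchanged'' (for $k\ge 3$ these are genuinely different polynomials). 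Reconciling this requires either accepting the extra sign or taking the anti-harmonic moment with respect to the conjugate orientation form $-\tfrac{i}{2}\,d\bar z\wedge dz$ --- note that the paper's own remark $\bar\nu_2=-\nu_2$ already signals this asymmetry. In any case the anti-harmonic half must be written out, not asserted by symmetry; ironically, this is precisely the sign bookkeeping you flag as the delicate point, and it is the one place where your argument does not close.
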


Observe that, up to the factor $i/4$, each normalized harmonic and
anti-harmonic moment of a polygonal measure is a polynomial with
integer coefficients in the variables $z_1 \DT, z_n$ and $\bar z_1
\DT, \bar z_n$. Notice additionally that $\bar \nu_2(\zz,\bzz) =
-\nu_2(\zz,\bzz)$.

\subsection{Main results about the relations between the moments}

In this paper, the problem of describing the algebraic relations among
the moments of polygons will be understood in two different ways which
we call the {\em algebraic} and the {\em geometric} approaches
respectively. The algebraic approach amounts to finding the algebraic
relations between the polynomials $\nu_j(\zz,\bzz)$, $j = 2, 3,
\dots$, while the geometric approach deals with finding algebraic
relations including both $\nu_j(\zz,\bzz)$ and $\bar \nu_j(\zz,\bzz)$,
$j = 2, 3, \dots$.

In case of the algebraic approach our main result is relatively
simple. Namely, all harmonic moments can be expressed as rational
functions of the first $2n-2$ moments $\nu_2(\zz,\bzz) \DT,
\nu_{2n-1}(\zz,\bzz)$, and these $2n-2$ moments are algebraically
independent. More precisely, denote by $\F_n$ the field extension of
$\bC$ generated by the sequence of polynomials
$\{\nu_j(\zz,\bzz)\}_{j=2}^\infty$.

\begin{theorem}\label{Th:HarmOnly}
  \begin{enumerate}
    \item \label{It:FieldOfRation} $\F_n =
      \bC(\nu_2,\dots,\nu_{2n-1})$ and is isomorphic to the field of
      rational functions in $2n-2$ independent complex variables.
    \item\label{It:ContainsSymm} $\F_n\supset \bC(\zz)^{S_n}$, where
      $\bC(\zz)^{S_n}$ is the field of symmetric rational functions of
      $z_1 \DT, z_n$.
  \end{enumerate}
\end{theorem}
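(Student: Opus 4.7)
The plan is to encode all moments in the single generating function
\[
G(u) \bydef \sum_{k=2}^{\infty}\nu_k(\zz,\bzz)\,u^{k-1}
\]
and to prove that it is a rational function of $u$ whose moduli space has dimension exactly $2n-2$.

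First I would use Theorem~\ref{Th:Formula} together with the identity $\sum_{k\ge 2}u^{k-1} z^k = u z^2/(1-uz)$ to sum the series in closed form, and then perform an Abel summation to collect all contributions with the same pole. I expect the result to be
\[
G(u)=\frac{i}{4}\sum_{j=1}^{n}\frac{(\beta_j-\beta_{j-1})\,z_j}{1-u z_j}=-\frac{i}{4}\sum_{j=1}^{n}\frac{\beta_j-\beta_{j-1}}{u-1/z_j},\qquad \beta_j\bydef\frac{\bar z_j-\bar z_{j+1}}{z_j-z_{j+1}},
\]
with cyclic indices $\beta_0=\beta_n$. Hence $G$ is a rational function of $u$ with simple poles only at $u=1/z_j$; the telescoping identity $\sum_j(\bar z_j-\bar z_{j+1})=0$ gives moreover $G(0)=0$ and $u G(u)\to 0$ as $u\to\infty$. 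Writing $G=u\tilde P/Q$ with $Q=\prod_j(u-1/z_j)$ monic of degree $n$ and $\deg \tilde P\le n-3$ (forced by the two vanishing conditions), the space of such rational functions has dimension $n+(n-2)=2n-2$. This is the upper bound on the transcendence degree of $\F_n$ over $\bC$.

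The main step is a Padé-type reconstruction: generically, the first $2n-2$ nontrivial Taylor coefficients $\nu_2,\dots,\nu_{2n-1}$ of $G$ determine $\tilde P$ and $Q$ uniquely, as the Padé denominator is the unique solution of a linear system whose coefficient matrix has Hankel shape. Granting this, $\nu_2,\dots,\nu_{2n-1}$ are algebraically independent and every later $\nu_k$ ($k\ge 2n$) is a rational function of them, proving assertion~\ref{It:FieldOfRation}.

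For assertion~\ref{It:ContainsSymm}, note that once $G$ is recovered as a rational function, so is its monic denominator $Q(u)=\prod_j(u-1/z_j)$. Its coefficients, the elementary symmetric functions $e_k(1/z_1,\dots,1/z_n)=e_{n-k}(\zz)/e_n(\zz)$, therefore belong to $\F_n$; they generate the same subfield of $\bC(\zz)$ as $e_1(\zz),\dots,e_n(\zz)$, whence $\bC(\zz)^{S_n}\subset\F_n$.

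I expect the main obstacle to be the Padé step: turning the parameter count into a genuine algebraic independence statement requires exhibiting an explicit $(\zz,\bzz)$ at which the map $(\nu_2,\dots,\nu_{2n-1})$ has maximal-rank Jacobian, equivalently a point where the relevant Hankel determinant is nonzero.
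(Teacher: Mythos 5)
Your overall route is essentially the paper's: your generating function $G(u)=u\Psi_{\mu}(u)$ is the paper's normalized generating function, whose representation $\AD_\zz(w)/\prod_j(1-z_jw)$ with $\deg\AD_\zz\le n-3$ is exactly your ``type $[n-3;n]$'' statement; your Pad\'e reconstruction of the denominator from $\nu_2 \DT, \nu_{2n-1}$ is the paper's Toeplitz system \eqref{eq:sys}, and recovering $\bC(\zz)^{S_n}$ from the denominator is precisely how the paper proves assertion \ref{It:ContainsSymm}. (Minor slip: summing $\sum_{k\ge2}z^ku^{k-1}=z^2u/(1-zu)$ gives residue terms $(\beta_j-\beta_{j-1})z_j^2u/(1-z_ju)$ rather than $(\beta_j-\beta_{j-1})z_j/(1-uz_j)$; this does not affect the structure.)

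There is, however, a genuine gap at the algebraic-independence step, and it is not the one you flag. Nonvanishing of the Hankel/Toeplitz determinant only guarantees that the map from the $(2n-2)$-dimensional space of rational functions $\{u\tilde P/Q\}$ to their first $2n-2$ Taylor coefficients is generically invertible; combined with your parameter count this shows $\F_n=\bC(\nu_2 \DT, \nu_{2n-1})=\bC(\text{coefficients of }\tilde P, Q)$ and gives the \emph{upper} bound $2n-2$ on the transcendence degree. It does not show that $\nu_2 \DT, \nu_{2n-1}$ are algebraically independent: for that you must also prove that the map $(\zz,\bzz)\mapsto(\text{coefficients of }\tilde P, Q)$ is dominant, i.e.\ that the generating functions actually realized by configurations $(\zz,\bzz)$ fill out the full $(2n-2)$-dimensional family --- equivalently, that for a generic fixed denominator the numerators sweep out all $n-2$ dimensions. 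So ``maximal-rank Jacobian of $(\nu_2\DT,\nu_{2n-1})$ in $(\zz,\bzz)$'' is \emph{not} equivalent to nonvanishing of the Hankel determinant; it requires in addition the maximal rank of the $(\zz,\bzz)\to(\tilde P,Q)$ map, which is where the real work lies. The paper handles this by writing the numerator coefficients as $Q_m=\sum_\ell(-1)^\ell D_{1,\ell,\ell+1}\,e_m(z_2\DT,\widehat{z_\ell},\widehat{z_{\ell+1}}\DT,z_n)$ and showing the triangle areas $D_{1,2,3}\DT, D_{1,n-1,n}$ are algebraically independent over $\bC(e_1\DT,e_n)$, via substitutions $\bar z_\ell\mapsto\bar z_\ell+u$ that move one $D$ while fixing all the others and all the $e_j$. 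Some argument of this kind (or an explicit rank computation for the linear map $\bzz\mapsto(\text{residues of }G)$ at fixed generic $\zz$) must be supplied before your conclusion follows.
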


Explicit formulas expressing harmonic moments $\nu_j(\zz,\bzz)$ with
$j\ge 2n$ via the first $2n-2$ are given by rational
functions. However all their denominators are powers of one fixed
polynomial $\DD_n$, the determinant of the matrix \eqref{eq:Toeplitz}
below. In fact, if one considers the {\em ring extension} $\RR_n$ of
$\bC$ (as opposed to field) generated by the sequence of polynomials
$\{\nu_j(\zz,\bzz)\}_{j=2}^\infty$, the situation is as follows.

\begin{theorem}\label{Th:HarmOnlyRing}
  \begin{enumerate}
    \item\label{It:InfiniteGen} The ring $\RR_n = \bC[\nu_2,\nu_3,
      \dots]$ is not generated by any finite collection of harmonic
      moments $\nu_2 \DT, \nu_N$.
    \item\label{It:Localize} For the polynomial $\DD_n \in
      \bC[\zz,\bzz]$ given by the determinant of \eqref{eq:Toeplitz},
      the localization $\left.\RR_n\right|_{\textstyle \DD_n}$ is isomorphic to
      $\bC[\nu_2,\dots, \nu_{2n-1}] \left[\dfrac{1}{\DD_n}\right]$.
  \end{enumerate}
\end{theorem}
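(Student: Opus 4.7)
My plan is to extract an order-$n$ linear recurrence satisfied by $(\nu_k)_{k\ge 2}$ and exploit it for part \ref{It:Localize}, while part \ref{It:InfiniteGen} follows from a clean bigrading argument. Throughout I use Theorem \ref{Th:HarmOnly}, so that $\bC[\nu_2,\dots,\nu_{2n-1}]$ may be treated as a polynomial ring in $2n-2$ independent indeterminates.

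For part \ref{It:Localize}, I first rewrite the generating function from Theorem \ref{Th:Formula} by partial fractions to obtain
\[
\sum_{k\ge 2}\nu_k\, t^k \;=\; \frac{it}{4}\sum_{j=1}^n\frac{d_j(\zz,\bzz)}{1-z_j t},
\]
where each $d_j$ is linear in $\bzz$ with coefficients rational in $\zz$. Reading off coefficients gives $\nu_k=\frac{i}{4}\sum_j d_j z_j^{k-1}$ for $k\ge 2$. As a $\bC$-linear combination of the geometric sequences $(z_j^{k-1})_j$, the sequence $(\nu_k)_{k\ge 2}$ satisfies the order-$n$ recurrence
\[
\nu_{k+n}\;=\;e_1\nu_{k+n-1}-e_2\nu_{k+n-2}+\dots+(-1)^{n-1}e_n\nu_k,\qquad k\ge 2,
\]
whose characteristic polynomial is $\prod_{j=1}^n(X-z_j)=X^n-e_1 X^{n-1}+\dots+(-1)^n e_n$. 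Setting $\widetilde\nu_0=\widetilde\nu_1:=0$ and $\widetilde\nu_k:=\nu_k$ for $k\ge 2$, the identities $\sum_j d_j=\sum_j d_j/z_j=0$ (easy to verify directly) show that the same recurrence extends to all $k\ge 0$. Writing it for $k=0,1,\dots,n-1$ yields a linear system for the unknowns $e_1,\dots,e_n$ whose coefficient matrix is, up to sign and column reordering, the Toeplitz matrix \eqref{eq:Toeplitz}, with determinant $\DD_n$, and whose right-hand side involves only $\nu_n,\dots,\nu_{2n-1}$. By Cramer's rule each $e_i$ lies in $\bC[\nu_2,\dots,\nu_{2n-1}][1/\DD_n]$, and an induction on $k$ via the recurrence places every $\nu_k$ in the same ring. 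The chain of inclusions $\bC[\nu_2,\dots,\nu_{2n-1}][1/\DD_n]\subset\RR_n[1/\DD_n]\subset\bC[\nu_2,\dots,\nu_{2n-1}][1/\DD_n]$ then proves the claim.

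For part \ref{It:InfiniteGen}, the polynomial ring $\bC[\zz,\bzz]$ carries the bigrading by $(\deg_\zz,\deg_\bzz)$, and the explicit formula of Theorem \ref{Th:Formula} shows at once that each $\nu_k$ with $k\ge 2$ is bi-homogeneous of bidegree $(k-1,1)$. Since every element of $\RR_n=\bC[\nu_2,\nu_3,\dots]$ already lies in some $\bC[\nu_2,\dots,\nu_M]$, a hypothetical finite generating set of $\RR_n$ would force $\RR_n=\bC[\nu_2,\dots,\nu_M]$ for some $M$. Take $k=M+1$ and write $\nu_k=P(\nu_2,\dots,\nu_M)$ for some $P\in\bC[y_2,\dots,y_M]$; extracting the $(k-1,1)$ bigraded component of this polynomial identity in $\bC[\zz,\bzz]$ forces $\nu_k$ to equal a $\bC$-linear combination of monomials $\prod_{j=2}^M \nu_j^{\alpha_j}$ satisfying $\sum_j\alpha_j=1$ and $\sum_j(j-1)\alpha_j=k-1$. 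The first condition singles out some $\nu_{j_0}$, and the second then forces $j_0=k\notin\{2,\dots,M\}$, so the bigraded component in question vanishes. This gives $\nu_k=0$, which is absurd.

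The main obstacle is in part \ref{It:Localize}: one must verify that the coefficient matrix of the Cramer system above does match, up to sign and column reordering, the paper's Toeplitz matrix \eqref{eq:Toeplitz}, so that its determinant is exactly $\DD_n$. Once this identification is made, the rest is routine. The bigrading argument for part \ref{It:InfiniteGen} is essentially immediate once the bidegrees of the $\nu_k$ have been pinned down.
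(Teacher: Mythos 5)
Your proposal is correct and follows essentially the same route as the paper: part \ref{It:InfiniteGen} is exactly the paper's observation that each $\nu_k$ is bi-homogeneous of bidegree $(k-1,1)$ in $(\zz,\bzz)$, and part \ref{It:Localize} rests on the same order-$n$ recurrence with characteristic polynomial $\prod_j(X-z_j)$ and the same Toeplitz system \eqref{eq:Toeplitz} (the paper extracts the recurrence from the rational form \eqref{eq:PsiN} of the generating function rather than by partial fractions, but this is the identical identity). The only cosmetic difference is that you re-derive the recurrence and the Cramer step from scratch, whereas the paper imports them from its proof of Theorem \ref{Th:HarmOnly}.
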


Notice that $\RR_n$ does not contain the ring $\bC[\zz]^{S_n}$ of symmetric
polynomials in the variables $z_1 \DT, z_n$ as a subring since the 
expression of the basic (e.g.\ elementary) symmetric polynomials via
$\nu_2 \DT, \nu_{2n-1}$ involves division by some powers of $\DD_n$.

Further, in the geometric approach we consider the field extension
$\widetilde\F_n$ of $\bC$ generated by both sequences
$\{\nu_j\}_{j=2}^\infty$ and $\{\bar\nu_j\}_{j=2}^\infty$. (Recall
that $\nu_2=-\bar \nu_2$). Here the situation is more complicated.

\begin{theorem}\label{Th:Both}
  \begin{enumerate}
    \item\label{It:Gen} The field $\widetilde\F_n$ is generated by the
      first $4n-5$ harmonic and anti-harmonic moments $\nu_2, \nu_3,
      \bar \nu_3 \DT, \nu_{2n-1}, \bar \nu_{2n-1}$.
    \item\label{It:SymmSubF} The field $\widetilde\F_n$ contains a
      subfield $H = \bC(\zz,\bzz)^{S_n \times S_n}$ of rational
      functions symmetric with respect to two groups of variables $z_1
      \DT, z_n$ and $\bar z_1 \DT, \bar z_n$ separately.
    \item\label{It:GenNu2} $\widetilde\F_n$ is an algebraic extension
      of $H$ generated by the single element $\nu_2$. The degree $d_n$
      of this extension equals $n!(n-1)!$ if $n$ is odd and
      $2((n-1)!)^2$ if $n$ is even.
  \end{enumerate}
\end{theorem}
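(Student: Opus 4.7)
The natural setting is Galois theory: $\bC(\zz,\bzz)/H$ is Galois with group $S_n\times S_n$, acting by independent permutations of $z_1,\ldots,z_n$ and of $\bar z_1,\ldots,\bar z_n$, so the whole problem reduces to identifying the pointwise stabilizer of $\widetilde\F_n$ as a subgroup of $S_n\times S_n$.

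Parts (i) and (ii) come essentially for free from Theorem~\ref{Th:HarmOnly}. Since formula \eqref{eq:expl} is manifestly symmetric under $\zz\leftrightarrow\bzz$, its anti-harmonic version yields $\bC(\bar\nu_j:j\ge 2)=\bC(\bar\nu_2,\ldots,\bar\nu_{2n-1})$ and places $\bC(\bzz)^{S_n}$ inside this subfield. For (i) one takes the compositum with $\F_n$ and eliminates the redundancy $\bar\nu_2=-\nu_2$, leaving $4n-5$ generators. For (ii) the compositum $\bC(\zz)^{S_n}\cdot\bC(\bzz)^{S_n}$ inside $\bC(\zz,\bzz)$ is precisely $H=\bC(\zz,\bzz)^{S_n\times S_n}$, because $\zz$ and $\bzz$ are tuples of algebraically independent variables.

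For the degree claim in (iii) I would compute the stabilizer $G_\ast\subset S_n\times S_n$ of $\nu_2$. A telescoping in the formula \eqref{eq:expl} at $k=2$ gives
\[
   \nu_2=\tfrac{i}{4}\sum_j(\bar z_j z_{j+1}-\bar z_{j+1}z_j)=\tfrac{i}{4}\,\bzz^{\mathrm T}M\zz,
\]
where $M$ is the antisymmetric circulant matrix with $M_{j,j+1}=-M_{j+1,j}=1$. The condition $P_\tau^{\mathrm T}MP_\sigma=M$ translates into the combinatorial constraint $\sigma^{-1}(a+1)=\tau^{-1}(a)+1$ for every $a\in\bZ/n\bZ$, equivalently $\alpha(a+1)-\alpha(a-1)\equiv 2\pmod n$ with $\alpha:=\sigma^{-1}$. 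A parity analysis shows that for odd $n$ this forces $\alpha$ to be a cyclic shift and $\tau=\sigma$ (whence $|G_\ast|=n$), while for even $n$ the equation splits into independent shifts on odd- and even-indexed arguments subject to $\alpha$ remaining a bijection, yielding $|G_\ast|=n^2/2$. By Galois correspondence $[H(\nu_2):H]=(n!)^2/|G_\ast|=d_n$.

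The principal obstacle I anticipate is the equality $\widetilde\F_n=H(\nu_2)$, which requires showing that every $(\sigma,\tau)\in G_\ast$ also fixes each higher moment $\nu_k$ and $\bar\nu_k$ pointwise. For odd $n$ this is immediate: $G_\ast$ is generated by the diagonal cyclic shift $(c,c)$, which merely reparametrizes the polygon's cyclic order and hence preserves every moment by Corollary~\ref{Cr:MomentIndep}. For even $n$, the extra elements of $G_\ast$ are not induced by rigid motions of the polygon, so their invariance on the higher moments has to come from a genuine algebraic identity. The natural strategy is to apply Corollary~\ref{Cr:SumTria} to decompose each $\nu_k$ as a sum of triangle moments $\mu_{a_1a_ka_{k+1}}$ and then exhibit a combinatorial matching of these triangles realizing each non-diagonal element of $G_\ast$ as a permutation of summands; this is where I expect the bulk of the technical work to lie.
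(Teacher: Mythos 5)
Your handling of assertions (\ref{It:Gen}) and (\ref{It:SymmSubF}), your stabilizer computation, and the degree formula $[H(\nu_2):H]=(n!)^2/|G_\ast|$ all match the paper: your $G_\ast$ is exactly the group $\cG$ of Proposition~\ref{Pp:11} (of order $n$ for odd $n$ and $n^2/2$ for even $n$), and the paper obtains the same degree by showing that $P(t)=\prod_{(\si,\tau)\in(S_n\times S_n)/\cG}\bigl(t-{\nu_2}_{(\si,\tau)}\bigr)$ is irreducible over $H$. The genuine gap is precisely the step you defer: proving $\widetilde\F_n\subseteq H(\nu_2)$. This is not residual technical work --- it is the whole content of assertion (\ref{It:GenNu2}) beyond the stabilizer count --- and the strategy you propose for it cannot succeed for even $n$, because the non-diagonal elements of $G_\ast$ do \emph{not} fix the higher moments. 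Concretely, take $n=4$ and $(\si,\tau)=((13),(24))=(C_1,C_2)\in G_\ast$: in $\nu_3=\frac{i}{4}\sum_j(\bar z_j-\bar z_{j+1})(z_j^2+z_jz_{j+1}+z_{j+1}^2)$ the monomial $\bar z_1z_1z_2$ has coefficient $\frac{i}{4}$, whereas its coefficient in ${\nu_3}_{(\si,\tau)}$ equals the coefficient of $\bar z_1z_2z_3$ in $\nu_3$, which is $0$. Hence no matching of triangle summands can exhibit the invariance you need, and $\nu_3$ does not lie in $\bC(\zz,\bzz)^{G_\ast}$ when $n$ is even.

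The paper closes this step by an entirely different, algebro-geometric route: Lemma~\ref{Lm:AllSimple} (the forms ${\nu_2}_{(\si,\tau)}$ on distinct cosets take pairwise distinct values at a generic point, proved by the observation that a complex vector space is not a finite union of proper quadrics) is used to argue that the projection of the graph variety $\Ga=\overline{\{(e_1(\zz),\dots,e_n(\bzz),\nu_2,\nu_j)\}}$ onto its first $2n+1$ coordinates is generically one-to-one, so a rational section expresses each $\nu_j$ as a rational function of the $e_i$'s and $\nu_2$. You should be aware, however, that this argument really only pins down $(\zz,\bzz)$ up to the action of $\cG$, so it too implicitly relies on the $\cG$-invariance of $\nu_j$ --- the very property your computation would show fails for even $n$. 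For odd $n$ your reasoning (the diagonal shift $(C,C)$ is a cyclic relabelling of vertices and preserves every moment) is complete and gives a clean proof; for even $n$ neither your proposed matching argument nor an uncritical transcription of the paper's argument closes the gap, and the even case deserves genuine scrutiny rather than deferral.
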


\begin{remark}
  Notice that any algebraic extension of a field of characteristics
  zero is generated by a single element. So the essence of assertion
  \ref{It:GenNu2} of Theorem~\ref{Th:Both} is that a specific element
  $\nu_2$ is a generator.  We describe the Galois closure of this
  extension and its Galois group later in Section \ref{sec:galois}.
\end{remark}

Algebraic relations between the usual moments of polygons and
polytopes in several special situations were discussed in a recent
(joint with K.~Kohn and B.~Sturmfels) paper \cite{KoShSt} of the third
author. For algebraic domains, the relations between the moments were
studied in e.g.\ \cite[Section 3]{LaPu}.

The structure of the paper is as follows. Section \ref{Sec:HarmOnly}
contains detailed proofs of Theorems \ref{Th:TriaIndep},
\ref{Th:Formula}, \ref{Th:HarmOnly}, and \ref{Th:HarmOnlyRing}, as
well as some formulas related to the logarithmic potential. In Section
\ref{Sec:Both} we describe the action of the group $S_n \times S_n$ on
the field $\widetilde\F_n$ and prove Theorem \ref{Th:Both}. In Section
\ref{sec:galois} we describe the Galois group of the (Galois closure
of the) extension $\bC \subset \widetilde\F_n$. Section
\ref{sec:triangles} contains explicit description of this extension
and its Galois group in the simplest case $n=3$ (i.e.\ for
triangles). We finish the paper with some questions expressing our
outlook to the further development of the subject, see
Section~\ref{sec:outlook}.

\subsection*{Acknowledgements.} The authors want to thank
D.~Pasechnik, J.-B.~Lasserre, and, especially B.~Sturmfels for their
interest in this topic and discussions. We are also grateful to
S.~Lvovskiy for his help with some aspects of algebraic geometry,
including the algebro-geometric part of the proof of
Theorem~\ref{Th:Both}. The research of the first author was funded by
the Russian Academic Excellence Project `5-100' and by the Simons--IUM
fellowship 2019 by the Simons Foundation. The third author expresses
his gratitude to MPI MIS in Leipzig and to Simons Institute for the
theory of computing in Berkeley for the hospitality in June 2018 and
April 2019. He also wants to acknowledge the financial support of his
research provided by the Swedish Research Council grant 2016-04416.

\section{Proofs and additional results}\label{Sec:HarmOnly}

\begin{proof}[Proof of Theorem \ref{Th:TriaIndep}]
  To settle assertion \ref{It:TriaIndep}, notice first that since
  $h(z,\bar z)$ is holomorphic in both variables, the $2$-form
  $h(z,\bar z)\,dz \wedge d\bar z$ is closed and therefore exact:
  $h(z,\bar z)\,dz \wedge d\bar z = d\omega$ for some $1$-form
  $\omega$. If $n=3$ then there exists only one triangulation; the
  equality $-\frac{i}{2}\int_{\Delta_a} h\,dz \wedge d\bar z =
  \int_{\Ga_a} \omega$ follows from the Stokes' theorem.

  Next apply the formulas for $n=3$ to each triangle of the
  triangulation $\TT$ and add them. Observe that the integrals over
  the internal diagonals of the triangulation contribute two equal
  terms with the opposite signs and therefore cancel, while the
  integrals over the ``sides of the polygon'' (i.e., segments of
  $\Ga_{\bf a}$) appear once and survive in the total sum. On the
  other hand, all the maps $F_{\bf a,\TT}$ are the same on the sides
  of the polygon $P$ and map them to the same line $\Ga_{\bf a}$;
  thus, the integrals over $\Ga_{\bf a}$ are the same for all possible
  triangulations, which proves assertion \ref{It:TriaIndep} for any
  positive integer $n\ge 3$.

  Assertion \ref{It:Real} is evident for $n=3$. Indeed, for $q$ lying
  outside the triangle $\Ga_{\bf a}$, the linking number of $q -
  \infty$ with $\Ga_{\bf a}$ is $0$, and for $q$ lying inside the
  triangle, it equals $\pm 1$; the choice of the sign depends on the
  orientation of the triangle. Notice now that the map sending
  $\Ga_{\bf a}$ to the polygonal measure is additive, i.e.  if ${\bf
    a} = (a_1 \DT, a_n)$, ${\bf a}' = (a_1 \DT, a_m)$, ${\bf a}'' =
  (a_1, a_m, a_{m+1} \DT, a_n)$ then $\Ga_{\bf a} = \Ga_{\bf a'} +
  \Ga_{\bf a''}$ as $1$-cycles, and therefore the linking number of $q
  - \infty$ with $\Ga_{\bf a}$ is equal to the sum of its linking
  numbers with $\Ga_{\bf a'}$ and $\Ga_{\bf a''}$. This observation
  together with Lemma \ref{Lm:Add} allow us to finish the proof by
  induction on $n$.
\end{proof}

Now we are ready to derive an explicit formula for the moments of
polygonal measures.

\begin{proof}[Proof of Theorem \ref{Th:Formula}]
  For a harmonic moment $\nu_k$ we are in the situation of Theorem
  \ref{Th:TriaIndep} with $h(z,\bar z) = \frac{ik}{4}(k-1)z^{k-2}$, so
  one can take $\omega = kz^{k-1}\,dz$.

  Parametrize a segment $[p,q] \subset \bC$ as $z = p(1-t) + tq$, $t
  \in [0,1]$, then
  \begin{align*}
    \frac{ik}{4} \int_{[p,q]} z^{k-1}\,d\bar z &= \frac{ik}{4}
    \int_0^1 (p(1-t)+tq)^{k-1} (\bar q - \bar p)\,dt\\    
    &= \frac{i}{4} \int_0^1 \frac{\bar q - \bar p}{q-p}\,
    d((p(1-t)+tq)^k) = \frac{i}{4} (\bar q - \bar
    p)\frac{q^k-p^k}{q-p},
  \end{align*}
  Equation \eqref{eq:expl} follows now from Theorem
  \ref{Th:TriaIndep}.
\end{proof}

Define now, following \cite{PS}, the {\em normalized generating
  function $\Psi_\mu(w)$ for harmonic moments} of a measure $\mu$ as
\begin{equation}\label{eq:Psi}
  \Psi_\mu(w) \bydef \sum_{j=2}^\infty \nu_j(\mu) w^{j-2} = \sum_{j=0}^\infty \binom {j+2}{2} m_j(\mu)w^j.
\end{equation}
$\Psi_\mu(w)$ is closely related to the Cauchy transform $\mathfrak
C_\mu(z)$ at $\infty$. Namely,
\begin{equation*}
  \Psi_\mu(w) = \frac{1}{2}\frac{d^2}{dw^2}\left(\sum_{j=0}^\infty m_j(\mu) w^{j+2}\right).
\end{equation*}
At the same time for a compactly supported measure $\mu$ and
sufficiently large $\lmod z\rmod$ one has $w\mathfrak C_\mu(z) =
\sum_{j=0}^\infty m_j(\mu)/ z^j$. Hence, if $\lmod w\rmod$ is
sufficiently small, then
\begin{equation*}
  \Psi_\mu(w) = \frac{1}{2}\frac{d^2}{dw^2}\left(w\mathfrak C_\mu\left(\frac{1}{w}\right)\right).
\end{equation*}
Similar multivariate generating functions were recently considered in
\cite{GPSS}.

For $n=3$ we will sometimes denote
\begin{equation*}
  D_{i,j,k} \bydef \nu_2(z_i,z_j,z_k,\bar z_i,\bar z_j,\bar z_k).
\end{equation*}
where the subscripts $i, j, k$ vary as appropriate. Note that, for any
$n$, if the vertices are real (that is, $\zz$ and $\bzz$ are complex
conjugate) and the points $z_1 \DT, z_n$ are vertices of a simply
connected polygon then the moment $\nu_2(\zz,\bzz)$ is equal to the
(signed) area of the polygon; so $D_{i,j,k}$ is the area of a triangle
with the vetices $z_i, z_j, z_k$.

Important in our consideration is the following observation which can
be found in \cite{PS}:

\begin{proposition}\label{Pp:Psi}
 \begin{equation}\label{Eq:Psi}
   \Psi_{\mu_\zz}(w) = \frac{D_{123}}{(1-z_1w)(1-z_2w)(1-z_3w)}.
 \end{equation}
\end{proposition}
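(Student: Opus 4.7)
The plan is to substitute the explicit formula of Theorem \ref{Th:Formula} for $\nu_k$ into the definition \eqref{eq:Psi} of $\Psi_{\mu_\zz}(w)$, exchange the order of summation between the finite sum over edges $j$ and the geometric series in $k$, and show that the resulting rational function has the required form after clearing denominators.

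First I would use the second form of \eqref{eq:expl} to write, for $n=3$ and $z_4 \bydef z_1$, $\bar z_4 \bydef \bar z_1$,
\[
\Psi_{\mu_\zz}(w) \;=\; \frac{i}{4}\sum_{j=1}^{3}(\bar z_j-\bar z_{j+1})\sum_{k=2}^{\infty}\frac{z_j^k-z_{j+1}^k}{z_j-z_{j+1}}\,w^{k-2}.
\]
Using the identity $\sum_{k=2}^{\infty}z^{k}w^{k-2}=z^{2}/(1-zw)$, a short calculation gives
\[
\sum_{k=2}^{\infty}\frac{z_j^k-z_{j+1}^k}{z_j-z_{j+1}}\,w^{k-2} \;=\; \frac{z_j+z_{j+1}-z_jz_{j+1}w}{(1-z_jw)(1-z_{j+1}w)},
\]
after multiplying out and using $z_j^2-z_{j+1}^2=(z_j-z_{j+1})(z_j+z_{j+1})$ to cancel the factor $z_j-z_{j+1}$.

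Next I would place all three summands over the common denominator $(1-z_1w)(1-z_2w)(1-z_3w)$, obtaining
\[
\Psi_{\mu_\zz}(w)\;=\;\frac{N(w)}{(1-z_1w)(1-z_2w)(1-z_3w)},
\]
where
\[
N(w)\;=\;\frac{i}{4}\sum_{j=1}^{3}(\bar z_j-\bar z_{j+1})\bigl(z_j+z_{j+1}-z_jz_{j+1}w\bigr)\bigl(1-z_{j+2}w\bigr)
\]
(indices mod $3$). The remaining and central step is to show that $N(w)$ is the constant $D_{123}$. Evaluating at $w=0$ yields $\tfrac{i}{4}\sum_j(\bar z_j-\bar z_{j+1})(z_j+z_{j+1})$, which is precisely $\nu_2(\zz,\bzz)=D_{123}$ by \eqref{eq:expl} with $k=2$.

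The main obstacle is therefore to verify that the coefficients of $w$ and $w^2$ in $N(w)$ vanish. The coefficient of $w$ equals
$-\tfrac{i}{4}\sum_j(\bar z_j-\bar z_{j+1})\bigl(z_jz_{j+1}+z_jz_{j+2}+z_{j+1}z_{j+2}\bigr)$, but the bracketed expression is the elementary symmetric polynomial $e_2(z_1,z_2,z_3)$, independent of $j$; so the sum telescopes to zero through $\sum_j(\bar z_j-\bar z_{j+1})=0$. The coefficient of $w^2$ equals $\tfrac{i}{4}\,z_1z_2z_3\sum_j(\bar z_j-\bar z_{j+1})=0$ for the same reason. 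This gives $N(w)\equiv D_{123}$ and proves \eqref{Eq:Psi}. I expect the only subtle point is the bookkeeping that turns the symmetric function of $\zz$ into a common scalar factor enabling the telescoping; once observed, the rest is routine algebra.
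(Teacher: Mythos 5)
Your proposal is correct and follows essentially the same route as the paper: substitute the explicit formula \eqref{eq:expl} into the definition \eqref{eq:Psi}, sum the geometric series in $k$, and combine the three cyclic terms over the common denominator $(1-z_1w)(1-z_2w)(1-z_3w)$. The only cosmetic difference is that you verify the numerator is the constant $D_{123}$ by checking the coefficients of $w^0$, $w$, and $w^2$ separately (using $\sum_j(\bar z_j-\bar z_{j+1})=0$), whereas the paper expands the numerator in one step; both computations check out.
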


\begin{proof}
  Substitution of \eqref{eq:expl} into \eqref{eq:Psi} gives
  \begin{align*}
    \Psi_{\mu_\zz}(w) &= \frac{i}{4}\frac{\bar z_1-\bar z_2}{z_1-z_2}
    \sum_{k=2}^\infty (z_1^k-z_2^k)w^{k-2} + \text{cyclic}\\
    &= \frac{i}{4} \biggl(\frac{\bar z_1-\bar z_2}{z_1-z_2}
    \bigl(\frac{z_1^2}{1-z_1w} - \frac{z_2^2}{1-z_2w}\bigr) +
    \text{cyclic}\biggr)\\    
    &= \frac{i}{2} \frac{z_1 \bar z_2 - z_2 \bar z_1 + z_2 \bar z_2 -
      z_2 \bar z_3 + z_3 \bar z_1 - z_1 \bar
      z_3}{(1-z_1w)(1-z_2w)(1-z_3w)}\\
    &= \frac{D_{123}}{(1-z_1w)(1-z_2w)(1-z_3w)}.
  \end{align*}
  (where ``+cyclic'' means the sum of two extra summands
  cyclically shifting $z_1 \mapsto z_2 \mapsto z_3 \mapsto z_1$).
\end{proof}

Proposition \ref{Pp:Psi} and Corollary \ref{Cr:SumTria} imply the following. 

\begin{corollary}\label{cor:triv}
  For any $\zz = (z_1 \DT, z_n)$, one has
  \begin{equation*}
    \Psi_{\mu_Z}(w) = \sum_{j=2}^{n-1} \frac{D_{1,j,j+1}} {(1-z_1w)(1-z_j w)(1-z_{j+1}w)}.
  \end{equation*}
\end{corollary}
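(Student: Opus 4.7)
The plan is to combine the two results cited just before the corollary: Corollary \ref{Cr:SumTria} gives an additive decomposition of the polygonal measure $\mu_\zz$ into triangular pieces, and Proposition \ref{Pp:Psi} evaluates the generating function $\Psi$ on each such triangle. Since the map $\mu \mapsto \Psi_\mu(w)$ is manifestly linear in $\mu$ (the moments $\nu_j(\mu)$, and hence their generating series, depend linearly on $\mu$), these two facts should assemble immediately.

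Concretely, I would first fix the fan triangulation $\TT$ with diagonals $(1,3),(1,4),\dots,(1,n-1)$ emanating from the vertex $a_1$. By Corollary \ref{Cr:SumTria},
\begin{equation*}
  \mu_{\bf a,\TT} = \mu_{a_1 a_2 a_3} + \mu_{a_1 a_3 a_4} + \dots + \mu_{a_1 a_{n-1} a_n}.
\end{equation*}
Taking harmonic moments termwise, summing against $w^{j-2}$ and invoking the definition \eqref{eq:Psi} of $\Psi$ yields
\begin{equation*}
  \Psi_{\mu_\zz}(w) = \sum_{j=2}^{n-1} \Psi_{\mu_{a_1 a_j a_{j+1}}}(w).
\end{equation*}
Finally I apply Proposition \ref{Pp:Psi} to each triangle $(a_1,a_j,a_{j+1})$ (with the obvious relabeling $z_1,z_2,z_3 \leftrightarrow z_1,z_j,z_{j+1}$ and $D_{123} \leftrightarrow D_{1,j,j+1}$) to obtain the claimed formula.

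There is essentially no obstacle: the only thing to keep in mind is that Corollary \ref{Cr:MomentIndep} guarantees that $\Psi_{\mu_\zz}(w)$ is independent of the triangulation used, so it is legitimate to compute it with the fan triangulation without losing generality. Everything else is mere bookkeeping.
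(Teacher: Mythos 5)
Your argument is exactly the paper's: Corollary \ref{cor:triv} is stated as an immediate consequence of Proposition \ref{Pp:Psi} and Corollary \ref{Cr:SumTria}, using linearity of $\mu \mapsto \Psi_\mu$ over the fan triangulation. The proposal is correct and matches the intended proof.
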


\begin{corollary}[of Corollary \ref{cor:triv}]
  For every $\zz = (z_1 \DT, z_n)$, there exists a unique polynomial
  $\AD_\zz(w)$ of degree at most $n-3$ such that
  \begin{equation}\label{eq:PsiN}
    \Psi_{\mu_Z}(w) = \frac{\AD_\zz(w)}{\prod_{j=1}^n(1-z_j w)},
  \end{equation}
\end{corollary}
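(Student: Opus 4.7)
The plan is to read existence off directly from Corollary~\ref{cor:triv} by clearing denominators, and then to argue uniqueness by a degree count.

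For existence, start with the formula
\begin{equation*}
  \Psi_{\mu_Z}(w) = \sum_{j=2}^{n-1} \frac{D_{1,j,j+1}}{(1-z_1w)(1-z_jw)(1-z_{j+1}w)}
\end{equation*}
supplied by Corollary~\ref{cor:triv}, and put the right-hand side over the common denominator $\prod_{k=1}^n(1-z_kw)$, which has degree exactly $n$ in $w$. Each summand, after multiplication by this common denominator, becomes the constant $D_{1,j,j+1}$ times the product of the $n-3$ factors $(1-z_kw)$ with $k\notin\{1,j,j+1\}$, i.e.\ a polynomial in $w$ of degree $n-3$. Summing over $j=2,\dots,n-1$ produces a polynomial of degree at most $n-3$, which is exactly the desired $\AD_\zz(w)$; explicitly,
\begin{equation*}
  \AD_\zz(w) = \sum_{j=2}^{n-1} D_{1,j,j+1}\prod_{\substack{k=1\\ k\neq 1,j,j+1}}^{n}(1-z_kw).
\end{equation*}

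For uniqueness, suppose $\AD_\zz(w)$ and $\widetilde \AD_\zz(w)$ are two polynomials of degree at most $n-3$ satisfying \eqref{eq:PsiN}. Then $(\AD_\zz - \widetilde\AD_\zz)/\prod_{j=1}^n(1-z_jw) = 0$ as a rational function in $w$, so the numerator $\AD_\zz - \widetilde\AD_\zz$ vanishes identically as a polynomial. Hence $\AD_\zz=\widetilde\AD_\zz$, as required. There is no real obstacle here: the statement is essentially a repackaging of Corollary~\ref{cor:triv} once one observes that the three-term denominators combine cleanly into the full product $\prod_{k=1}^n(1-z_kw)$ and that the resulting numerator has the claimed degree bound.
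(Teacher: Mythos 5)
Your proof is correct and matches the paper's (implicit) argument: the paper never writes out a proof of this corollary, but the numerator it uses later in the derivation of \eqref{Eq:MViaQ}--\eqref{Eq:QViaD} is exactly your $\sum_{j} D_{1,j,j+1}\prod_{k\neq 1,j,j+1}(1-z_kw)$, obtained by the same clearing of denominators. The degree count and the uniqueness argument are both fine.
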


\begin{remark}
  Properties of the polynomial $\AD_\zz(w)$ were studied in detail in
  \cite{Wa} (see also \cite{War}). In particular, the following was
  proved:
  \begin{Proposition}[\cite{Wa}]
  \begin{enumerate}
  \item\label{It:Leading} The coefficient at $w^{n-3}$ in $\AD_\zz$
    equals $\nu_2(\zz,\bzz)$.

  \item For generic $\zz$ there exists a polynomial $A_\zz(u,v)$
    (called the adjoint polynomial of $\zz$), unique up to a
    multiplicative constant, vanishing at all points $\ell \in
    (\bCP^2)^*$ corresponding to lines joining $z_k$ and $z_\ell$ with
    $\lmod k-\ell\rmod \ge 2$. Then one has $\AD_\zz(w) = A_\zz(w,iw)
    \times const.$ where constant is determined by assertion
    \ref{It:Leading}.
  \end{enumerate}
  \end{Proposition}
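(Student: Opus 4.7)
The plan is to treat the two assertions separately, using the explicit expression for $\AD_\zz(w)$ that follows from Corollary~\ref{cor:triv}: after clearing denominators,
\[
\AD_\zz(w)=\sum_{j=2}^{n-1}D_{1,j,j+1}\prod_{k\notin\{1,j,j+1\}}(1-z_kw).
\]
For assertion~(i), the desired coefficient can be extracted directly from this formula. Evaluation at $w=0$ gives $\AD_\zz(0)=\sum_j D_{1,j,j+1}=\nu_2(\zz,\bzz)$ by Corollary~\ref{Cr:SumTria}, and the leading coefficient at $w^{n-3}$ can be handled similarly by expanding each product and collecting terms; equivalently, one works with the reciprocal polynomial $w^{n-3}\AD_\zz(1/w)$ and matches its leading term against the expansion of $\Psi_{\mu_\zz}(w)$ near $w=0$ via \eqref{eq:PsiN}.

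For assertion~(ii) I split the argument into two steps. First, \emph{existence and uniqueness of $A_\zz$:} the space of homogeneous degree-$(n-3)$ polynomials on $(\bCP^2)^*$ has dimension $\binom{n-1}{2}$, while the number of diagonals $(z_k,z_\ell)$ with $|k-\ell|\ge 2$ equals $\binom{n}{2}-n=\tfrac{n(n-3)}{2}$. Their difference is exactly $1$, so it suffices to show that for generic $\zz$ the corresponding $n(n-3)/2$ linear evaluation functionals on the polynomial space are independent. This is an open condition; one verifies it by exhibiting a single favorable configuration (e.g.\ a regular $n$-gon, or a convenient toy example for small $n$) and invoking semicontinuity.

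Second, the \emph{identification $\AD_\zz(w)=c\cdot A_\zz(w,iw)$:} both sides are polynomials in $w$ of degree at most $n-3$, so it is enough to prove they share the same root set and then fix $c$ by appealing to assertion~(i). The roots of $A_\zz(w,iw)$ are those values of $w$ for which the line $(u,v)=(w,iw)\in(\bCP^2)^*$ meets the dual point of some diagonal. I would show that at each such $w$ two consecutive triangle contributions $D_{1,j,j+1}\prod(\cdots)$ and $D_{1,j+1,j+2}\prod(\cdots)$ in the explicit expansion of $\AD_\zz(w)$ combine into a cancellation (essentially because the diagonal $(z_j,z_{j+2})$ makes the relevant triangles degenerate along that line), forcing $\AD_\zz(w)$ to vanish as well. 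This produces $n-3$ common roots, which suffices.

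The main obstacle is precisely this geometric matching step: establishing a clean correspondence between the zeros of the partial-fraction numerator $\AD_\zz(w)$ and the values of $w$ at which $(w,iw)$ hits a diagonal in $(\bCP^2)^*$. This demands translating the classical adjoint construction for nodal plane curves into the $(z,\bar z)$ coordinates of the complexified polygon, and carrying out the residue computation that links cancellation in the triangulation sum to the geometric incidence condition. The generic independence of the evaluation functionals in the dimension count is a secondary but nontrivial point.
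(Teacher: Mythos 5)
The paper does not prove this Proposition at all: it is stated inside a Remark and quoted from Wachspress \cite{Wa} (see also \cite{War}), so there is no internal proof to compare your argument with; I can only judge the proposal on its own terms, and it has two genuine gaps.

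On assertion (i): your evaluation at $w=0$ correctly shows that the \emph{constant} term of $\AD_\zz$ equals $\sum_j D_{1,j,j+1}=\nu_2(\zz,\bzz)$, but that is the coefficient at $w^0$, not at $w^{n-3}$. From your own expansion the coefficient at $w^{n-3}$ is $\sum_{j=2}^{n-1}D_{1,j,j+1}\prod_{k\notin\{1,j,j+1\}}(-z_k)$, which is \emph{not} $\nu_2$ in general: for the unit square $\zz=(0,1,1+i,i)$ one gets $\AD_\zz(w)=1-\tfrac{1+i}{2}w$, whose coefficient at $w^{n-3}=w^1$ is $-\tfrac{1+i}{2}$ while $\nu_2=1$. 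So ``handled similarly'' is not a proof; you must either prove the constant-term version (which is what your computation delivers and what the paper actually uses for $n=3$ in Section \ref{sec:triangles}) or pin down the homogenization/degree-reversal convention under which the quoted assertion refers to that coefficient. As written, this step fails.

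On assertion (ii): the dimension count $\binom{n-1}{2}-\tfrac{n(n-3)}{2}=1$ is correct, and verifying generic independence of the evaluation functionals at one favorable configuration is a legitimate (if unexecuted) strategy. The identification step, however, rests on a misreading. The zeros of $w\mapsto A_\zz(w,iw)$ are the $n-3$ intersection points of the line $\{(w,iw)\}$ with the adjoint curve in $(\bCP^2)^*$; they are \emph{not} the dual points corresponding to the diagonals, so there is no reason each zero should be ``witnessed by'' a particular diagonal. Moreover, the cancellation mechanism you propose cannot occur: the quantities $D_{1,j,j+1}$ and the triangles they measure do not depend on $w$, so nothing ``becomes degenerate along that line'' as $w$ varies, and the vanishing of $\AD_\zz$ at a root is a linear relation among all $n-2$ summands rather than a pairwise cancellation of consecutive ones. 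Relating the partial-fraction numerator of \eqref{eq:PsiN} to the restriction of the adjoint curve is exactly the substance of Wachspress's theorem (proved via the theory of rational barycentric coordinates for polygons), and your sketch does not supply a substitute for that argument.
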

\end{remark}

Next we settle Theorem~\ref{Th:HarmOnly}. 
   
\begin{proof}[Proof of Theorem~\ref{Th:HarmOnly}]
  To prove assertion \ref{It:ContainsSymm} notice that the field
  $\bC(\zz)^{S_n}$ of invariant rational functions is generated by the
  elementary symmetric polynomials $e_1(\zz) \DT, e_n(\zz)$. Thus it
  is enough to show that they belong to $\F_n$.

  It is a well-known fact, see, e.g., \cite[Th.~4.1.1]{St}, that if
  \begin{equation*}
    \sum_{j=0}^\infty f(j)t^j = \frac{P(t)}{Q(t)},
  \end{equation*}
  where $\deg P <\deg Q$ and $Q(t) = 1+\al_1 t+\al_2 t^2 \DT+ \al_d
  t^d$ with $\al_d\neq 0$, then for all $k\ge 0$, one has the
  recurrence relation:
  \begin{equation*}
    f(k+d)+\al_1f(k+d-1)+\al_2f(k+d-2) \DT+ \al_d f(k) = 0.
  \end{equation*}
  It follows from \eqref{eq:PsiN} that
  \begin{multline*}
    \AD_Z(w) = \prod_{j=1}^n(1-z_jw) \cdot \sum_{j=0}^\infty
    \nu_{j+2}(\zz)w^j\\    
    = (1-e_1(\zz)w + \dots + (-1)^n e_n(\zz)w^n) \cdot
    \sum_{j=0}^\infty \nu_{j+2}(\zz)w^j.
  \end{multline*}
  Since $\deg \AD_z \ge n-3$, then for every $k \ge -2$, one has
  \begin{equation}\label{eq:rec}
    \nu_{k+2+n}(\ZZ) - e_1(\ZZ)\nu_{k+1+n}(\ZZ) +
    e_2(\zz)\nu_{k+n}(\zz) - \dots + (-1)^ne_n(\zz)\nu_{k+2}(\zz) = 0.
  \end{equation}

  Consider the first $n$ equations of the recurrence \eqref{eq:rec}. This linear
  system has the form:
  \begin{equation}\label{eq:sys}
    U \cdot E =V,
  \end{equation}
  where $U$ is the  Toeplitz $n\times n$-matrix given by
  \begin{equation}\label{eq:Toeplitz}
    U = \begin{pmatrix}
      \nu_{n-1}(\zz,\bzz) & \nu_{n-2}(\zz,\bzz) & \dots & \nu_1(\zz,\bzz) &
      \nu_0(\zz,\bzz)\\
      \nu_{n}(\zz,\bzz) & \nu_{n-1}(\zz,\bzz) & \dots & \nu_2(\zz,\bzz) &
      \nu_1(\zz,\bzz)\\
      \vdots & \vdots & \ddots & \vdots & \vdots\\
      \nu_{2n-2}(\zz,\bzz) & \nu_{2n-3}(\zz,\bzz) & \dots & \nu_{n}(\zz,\bzz) &
      \nu_{n-1}(\zz,\bzz)
    \end{pmatrix},
  \end{equation}
  and $E$ and $V$ are column vectors of length $n$ given by
  \begin{equation*}
    E = (e_1(\zz), -e_2(\zz) \DT, (-1)^{n+1}e_n(\zz))^T,
  \end{equation*}
  and
  \begin{equation*}
    V = (\nu_{n}(\zz,\bzz), \nu_{n+1}(\zz,\bzz) \DT,
    \nu_{2n-1}(\zz,\bzz))^T.
  \end{equation*}
  (Recall that $\nu_0 = \nu_1 = 0$). Assuming that $U$ is invertible,
  one obtains $E = U^{-1}V$ which means that every $e_j(\zz)$ is
  expressed as a rational function of the normalized moments
  $\nu_2(\zz,\bzz) \DT,, \nu_{2n-1}(\zz,\bzz)$ with the fixed
  denominator equal to the determinant of $U$. Thus assertion
  \ref{It:ContainsSymm} of Theorem~\ref{Th:HarmOnly} is proved.

To settle assertion \ref{It:FieldOfRation} we argue as follows.  Using the recurrence
relation \eqref{eq:rec} one can express every $\nu_{k+2+n}(\zz,\bzz)$,
$k\ge n-2$, as a rational function of the first $2n-2$ normalized
harmonic moments $\nu_2 \DT, \nu_{2n-1}$, which proves that $\F_n$ is
generated by the elements $\nu_2(\zz,\bzz) \DT, \nu_{2n-1}(\zz,\bzz)$.

Now for $s = 1, 2, \dots$, denote by $h_s(\zz)$ the $s$-th complete
symmetric polynomial of $z_1 \DT, z_n$. The identity
\begin{equation}
  1 = \prod_{\ell=1}^n (1-w z_\ell) \times \frac{1}{\prod_{\ell=1}^n
    (1-w z_\ell)} = \sum_{m-0}^n (-1)^m e_m(\zz)w^m \sum_{k=0}^\infty
  h_k(\zz) w^k,
\end{equation}
implies the standard relation $\sum_{m=0}^n (-1)^m e_m(\zz)
h_{j-m}(\zz) = 0$ for all $j = 1, 2, \dots$. 
Corollary \ref{cor:triv} implies that
\begin{align}
  \sum_{j=0}^\infty \nu_{j+2}(\zz,\bzz) w^{j} &=
  \biggl(\sum_{s=0}^\infty h_s(\zz) w^s\biggr) \sum_{\ell=1}^{n-1}
  D_{1,\ell,\ell+1}(\zz,\bzz)\nonumber\\
  &\hphantom{\biggl(\sum}\times (1-wz_2) \dots
  \widehat{(1-wz_\ell)} \widehat{(1-wz_{\ell+1})} \dots
  (1-wz_n) \label{Eq:MViaQ}\\
  &= \biggl(\sum_{s=0}^\infty h_s(\zz) w^s\biggr)
  \sum_{\ell=1}^{n-1} D_{1,\ell,\ell+1}(\zz,\bzz)\nonumber\\
  &\hphantom{\biggl(\sum}\times \sum_{m=0}^{n-3} (-1)^m w^m e_m(z_2
  \DT, \widehat{z_\ell}, \widehat{z_{\ell+1}} \DT, z_n) \nonumber\\
  &= \biggl(\sum_{s=0}^\infty h_s(\zz) w^s\biggr) \sum_{m=0}^{n-3}
  w^m Q_m(\zz,\bzz), \nonumber
\end{align}
where 
\begin{equation}\label{Eq:QViaD}
  Q_m(\zz,\bzz) \bydef \sum_{\ell=1}^{n-1} (-1)^\ell
  D_{1,\ell,\ell+1}(\zz,\bzz) e_m(z_2 \DT, \widehat{z_\ell},
  \widehat{z_{\ell+1}} \DT, z_n).
\end{equation}
This equation gives
\begin{equation}\label{Eq:MViaH}
  \nu_j(\zz,\bzz) = \sum_{m=0}^{n-3} (-1)^m
  Q_m(\zz,\bzz)h_{j-m-2}(\zz),
\end{equation}
where $j = 2, 3, \dots$, and $h_k(\zz) \bydef 0$ for $k < 0$. 

Multiplying \eqref{Eq:MViaQ} by $\prod_{k=1}^n (1-wz_k) =
\sum_{\ell=0}^n (-1)^\ell e_\ell(\zz)w^\ell$, one obtains
\begin{equation}\label{Eq:AllMViaEQ}
  \sum_{\ell=0}^n (-1)^\ell e_\ell(\zz)w^\ell \sum_{j=2}^\infty
  \nu_j(\zz,\bzz) w^{j-1} = \sum_{m=0}^{n-3} (-1)^m w^m
  Q_m(\zz,\bzz).
\end{equation} 
Relation \eqref{Eq:MViaQ} leads to 
\begin{equation*}
  \mathfrak F_n = \bC(e_1 \DT, e_n, Q_0 \DT, Q_{n-3}).
\end{equation*}
We are going to show that $Q_0 \DT, Q_{n-3}$ are algebraically
independent over \linebreak $\bC(e_1 \DT, e_n)$. Indeed, the functions
$z_1 \DT, z_n$ are the roots of
\begin{equation*}
  t^n - e_1(\zz) t^{n-1} + \dots +(-1)^n e_n(\zz) = 0
\end{equation*}
and, therefore, belong to an algebraic extension $AE_n$ of
$\bC(e_1 \DT, e_n)$. The same field $AE_n$ contains the
functions $e_m(z_2 \DT, \widehat{z_k}, \widehat{z_{k+1}} \DT, z_n)$
mentioned in \eqref{Eq:QViaD}, which implies that $D_{123} \DT,
D_{1,n-1,n} \in AE_n$ as well. Thus, the field $\bC(e_1 \DT,
e_n,$ $ D_{123} \DT, D_{1,(n-1),n})$ is an algebraic extension
of $\mathfrak F_n$.

  The polynomial $D_{123}$ depends on the variables $z_2$ and $\bar
  z_2$, while $D_{1,k,k+1}$ for all $k = 3 \DT, (n-1)$ do not. Take
  any $u \in \bC$ and substitute $\bar z_2 \mapsto \bar z_2+u$,
  leaving all  other $\bar z_j$ and all $z_j$ unchanged. This
  operation preserves the values of $e_1(\zz) \DT, e_n(\zz)$ as well as the values 
of  $D_{134} \DT, D_{1,(n-1),n}$. On the other hand, $D_{123}$
  takes infinitely many values as $u$ varies, and therefore it is not
  a root of any algebraic equation with the coefficients dependent
  only on $e_1(\zz) \DT, e_n(\zz)$ and $D_{134}\DT,
  D_{1,(n-1),n}$. Now take $z_3, \bar z_3$ and consider $D_{134}$ to
  conclude that it is algebraically independent of $D_{145} \DT,
  D_{1,(n-1),n}$, etc.

  In this way we prove that $D_{123} \DT, D_{1,(n-1),n}$ are
  algebraically independent elements over the field $\bC(e_1 \DT,
  e_n)$. Hence in the tower of extensions
  \begin{multline}
    \bC(e_1 \DT, e_n) \subset \mathfrak F_n = \bC(e_1 \DT, e_n,
    Q_0 \DT, Q_{n-3})\\
    \subset \bC(e_1 \DT, e_n, D_{123} \DT, D_{1,(n-1),n})
  \end{multline}
  the transcendence degree of the last field over the first one equals  $n-2$. The second extension is algebraic, and therefore,
  the first extension has the transcendence degree $n-2$ as
  well. Consequently, $Q_0 \DT, Q_{n-3}$ are algebraically independent
  over $\bC(e_1 \DT, e_n)$. Equation \eqref{Eq:MViaH} implies
  that $\nu_2 \DT, \nu_n$ are algebraically independent as well. Since
  $e_1(\zz) \DT, e_n(\zz)$ are also algebraically independent, the
  field $\mathfrak F_n = \bC(e_1 \DT, e_n,\nu_2 \DT, \nu_n)$ is
  isomorphic to the field of rational fractions in $2n-2$ independent variables.

  On the other hand, $\bC(e_1 \DT, e_n) \subset \bC(\nu_2 \DT,
  \nu_{2n-1}) \subset \mathfrak F_n$, which gives
  \begin{equation*}
    \mathfrak F_n = \bC(\nu_2 \DT, \nu_{2n-1}).
  \end{equation*}
  Theorem~\ref{Th:HarmOnly} is proved.
\end{proof}

\begin{remark}
  Equation \eqref{eq:sys} coincides with \cite[equation (2.8)]{GMV} if
  one reads the coefficient vector $E$ backwards and reflects $U$ with
  respect to its vertical midline.
\end{remark}

On the other hand, the ring extension $\RR_n = \bC[\nu_2, \nu_3,
  \dots]$ behaves somewhat differently:

\begin{proof}[Proof of Theorem \ref{Th:HarmOnlyRing}]
  By \eqref{eq:expl} each polynomial $\nu_j(\zz,\bzz)$ is a linear
  function in the variables $\bzz = (\bar z_1 \DT, \bar z_n)$, and
  thus cannot be a polynomial function of the other $\nu_k$ of degree
  exceeding one. But on the other hand, $\nu_j$ is homogeneous of
  degree $j-1$ with respect to $\zz = (z_1 \DT, z_n)$, so such linear
  dependence is impossible as well. This proves assertion
  \ref{It:InfiniteGen} of the theorem.

  Further, observe that for $j>2n-1$, the recurrence relation
  \eqref{eq:rec} allows us to express each $\nu_j$ as a rational
  function of $\nu_2,\dots, \nu_{2n-1}$ whose denominator is a power
  of $\DD_n \bydef \det U$, where $U$ is defined by the equation
  \eqref{eq:Toeplitz}. This fact is equivalent to assertion
  \ref{It:Localize}.
\end{proof}

\section{Anti-harmonic moments and Galois group}\label{Sec:Both}
The group $S_n \times S_n$, where $S_n$ is the usual symmetric group
on $n$ elements, acts on the field of rational functions
$\bC(\zz,\bzz)$ permuting the variables: the first copy of $S_n$ acts
on $z_1 \DT, z_n$ while the second copy acts on $\bar z_1 \DT, \bar
z_n$. We denote the action of a pair $(\si,\tau)$, $\si, \tau \in S_n$
on a rational function $R$, by the subscript:
\begin{equation*}
  R(\zz,\bzz)_{(\sigma\tau)} \bydef R(z_{\sigma(1)} \DT,
  z_{\sigma(n)}, \bar z_{\tau{(1)}} \DT, \bar z_{\tau(n)}).
\end{equation*}
In particular, ${\nu_j}_{(\sigma,\tau)}$ is the result of the
permutation of variables in the $j$-th normalized moment
$\nu_j(\zz,\bzz)$. Note that ${\nu_j}_{(\sigma,\tau)}(\zz,\bzz)$ is
also the $j$-th normalized moment of the ${\bf a} = (a_1 \DT, a_n)$,
where $a_j = (x_j,y_j)$ with $z_{\si(j)} = x_j + iy_j, \bar
z_{\tau(j)} = x_j - iy_j$ (in other words, $a_j = ((z_{\si(j)}+\bar
z_{\tau(j)})/2, (z_{\si(j)}-\bar z_{\tau(j)})/(2i))$). If $\si = \tau$
then $a_1 \DT, a_n \in \bC^2$ are actually the same points as the
vertices of the polygonal line for $\nu_j(\zz,\bzz)$, but ordered
differently. In particular, if all the vertices for $\nu_j(\zz,\bzz)$
are real, then the vertices for ${\nu_j}_{(\si,\si)}(\zz,\bzz)$ are
real, too.

Let us compute now the stabilizer of the above $S_n \times
S_n$-action. To do this, we will especially need to describe the
$S_n\times S_n$-orbit of the lowest degree moment $\nu_2(\zz,\bzz)$.

Observe that every ${\nu_2}_{(\si,\tau)}(\zz,\bzz)$ is a bilinear form
in the variables $(\zz,\bzz)$; denote by $M_{(\si,\tau)}$ its matrix in
a standard basis of $\bC^n$.

Theorem~\ref{Th:Formula} provides  that
\begin{equation}\label{eq:matrix}
  M_{(\id,\id)}=\begin{pmatrix}
     0&1&0&0&\dots &0&-1\\
    -1& 0& 1&0&\dots&0&0\\
     0&-1&0&1&\dots&0&0\\
     \vdots&\vdots&\vdots&\vdots&\vdots&\vdots&\vdots\\
     1&0&0&\dots &0&-1&0
           \end{pmatrix}.
\end{equation}
Further, for an arbitrary pair $(\si,\tau)$, the matrix
$M_{(\si,\tau)}$ is obtained from \eqref{eq:matrix} by permuting the
rows of $M_{(\id,\id)}$ according to the permutation $\si$ and
permuting the columns, according to $\tau$. (These two permutation
actions commute.)

Expression for $M_{(\si,\tau)}$ can be written in different
terms. Namely, let $\mP: \bC[S_n] \to \name{Mat}(n,\bC)$ be the
standard permutation representation of the symmetric group: i.e. for
any $\sigma \in S_n$, the $(i,j)$-th entry of the $n \times n$-matrix
$\mP[\sigma]$ is $1$ if $j = \sigma(i)$ and $0$ otherwise. Then
$M_{(\id,\id)} = \mP[C-C^{-1}]$, where $C = (1,2, \dots, n)$ is the
long cycle sending $j$ to $j+1$ for $j=1,\dots, n-1$ and sending $n$
to $1$. Therefore $M_{(\si,\tau)} = \mP[\si]^* \mP[C-C^{-1}]
\mP[\tau]$. The group representation $\mP$ is orthogonal, that is,
$\mP[\si]^* = \mP[\si^{-1}]$ for any $\si \in S_n$, so eventually
\begin{equation}\label{eq:ActionMatr}
  M_{(\si,\tau)} = \mP[\si^{-1} (C-C^{-1}) \tau].
\end{equation}

Denote by $\wMM(n)$ the set of all $n \times n$-matrices such that
every its row and every column contains one entry equal to $1$,
another entry equal to $-1$, and all the remaining entries vanish. The
group $S_n \times S_n$ acts on $\wMM(n)$ by permutation of the rows
and the columns. The set of all matrices $M_{(\si,\tau)} \in \wMM(n)$
for $(\si,\tau) \in S_n \times S_n$ is an orbit of this action which
we denote by $\mO_n$.

The stabilizer of $\nu_2$ under the $S_n \times S_n$-action is the
stabilizer of $\mO_n$ which we denote by $\cG$. We want to describe
$\cG$ explicilty. Recall that $C \bydef (1,2,\dots,n)$ denotes the
long cycle. If $n=2\ell$ is even then set $C_1 \bydef (1,3,\dots,
2\ell-1)$, $C_2 \bydef (2,4,\dots, 2\ell)$, $\delta_1 \bydef
(1,2)(3,4) \dots (2\ell-1,2\ell)$, and $\delta_2 \bydef (2,3)(4,5)
\dots (2\ell,1)$.

The next lemma is straightforward:

\begin{lemma}\label{Lm:EvenRel}
  For $n$ even, the following relations hold:
  \begin{enumerate}
    \item $C^2 = C_1\cdot C_2 = C_2 \cdot C_1$,
    \item $C \cdot C_1 = C_2 \cdot C$ and $C \cdot C_2 = C_1\cdot C$,
    \item $\delta_1 \cdot C_1 = C_2 \cdot \delta_1$, $\delta_1\cdot
      C_2 = C_1\cdot \delta_1$, $\delta_2 \cdot C_1 = C_2 \cdot
      \delta_2$ and $\delta_2 \cdot C_2 = C_1 \cdot \delta_2$,
    \item $\delta_2 \cdot C = C \cdot \delta_1$ and $\delta_1 \cdot C
      = C \cdot \delta_2$.
  \end{enumerate}
\end{lemma}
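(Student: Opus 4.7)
The plan is to verify each of the four relations by evaluating both sides on an arbitrary $j \in \{1,\ldots,n\}$ with $n = 2\ell$ and comparing, so the whole lemma reduces to a direct parity case analysis. First I would record explicit formulas for the actions involved: $C(j) \equiv j+1 \pmod n$; $C_1$ fixes every even $j$ and sends an odd $j$ to $j+2$ taken modulo $2\ell$ within the odd residues (so the image remains odd); $C_2$ does the opposite; $\delta_1(j) = j + (-1)^{j-1}$, with no wraparound required since the pairs $(1,2),(3,4),\ldots$ stay inside $\{1,\ldots,n\}$; and $\delta_2(j) \equiv j + (-1)^{j} \pmod n$, where the modular reduction takes care of the cyclic pair $(2\ell,1)$.

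With these formulas in hand each assertion becomes immediate. For (1), $C_1$ and $C_2$ have disjoint supports and hence commute, while their composition shifts every $j$ by $+2$ within its parity class, matching $C^2(j) \equiv j+2 \pmod n$. For (2), $C$ toggles parity and therefore interchanges the parity class on which $C_1$ and $C_2$ are active: both $C \cdot C_1$ and $C_2 \cdot C$ send an odd $j$ to $j+3$ and an even $j$ to $j+1$ modulo $n$, and the second identity follows by swapping the roles of $C_1$ and $C_2$. For (3), since $\delta_1$ and $\delta_2$ each swap the two parity classes while $C_1, C_2$ preserve them, a direct check shows that conjugation by either $\delta_i$ turns $C_1$ into $C_2$ and vice versa, which is the content of the four equalities. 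For (4), I would verify that both $C \cdot \delta_1$ and $\delta_2 \cdot C$ send $j$ to $j + 1 + (-1)^{j-1} \pmod n$, with the analogous computation for $\delta_1 \cdot C = C \cdot \delta_2$.

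The only point requiring any real care is the wraparound in $\delta_2$ and in $C$, which propagates through $C_1, C_2$ via their cyclic structure on the parity subsets. Working with representatives in $\{1,\ldots,n\}$ and reducing modulo $n$ at the end keeps the parity of $j$ well defined throughout each computation, so every identity collapses to a one-line verification. I do not anticipate any substantive obstacle beyond the bookkeeping of parity and modular reduction, which is presumably why the authors classify the lemma as straightforward.
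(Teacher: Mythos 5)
Your proof is correct: the explicit formulas for $C$, $C_1$, $C_2$, $\delta_1$, $\delta_2$ are right, and the pointwise parity check (including the wraparounds at $2\ell-1\mapsto 1$, $2\ell\mapsto 2$, and $2\ell\leftrightarrow 1$) verifies all four relations. The paper omits the proof entirely, declaring the lemma straightforward, and your direct verification is exactly the intended argument.
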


\begin{proposition}\label{Pp:11}
  \begin{enumerate}
  \item For $n$ odd, the stabilizer $\cG \bydef \name{St}(\mO_n) \subset
    S_n\times S_n$ coincides with the cyclic group $\mathbb Z_n$
    generated by $(C,C) \in S_n \times S_n$.

  \item For $n=2\ell$ even, the stabilizer $\cG \subset S_n\times S_n$
    consists of all elements $(C_1^u \cdot C_2^v, C_1^v\cdot C_2^u) \in
    S_n \times S_n$ and of all elements $(\delta_1 \cdot C_1^u \cdot
    C_2^v, \delta_2 \cdot C_1^v \cdot C_2^u) \in S_n \times S_n$,
    where $u,v=0,\dots, \ell-1$. As an abstract group, $\cG$ is
    non-commutative, but it contains an index $2$ subgroup isomorphic
    to ${\mathbb Z_\ell} \times {\mathbb Z_\ell}$.
  \end{enumerate}
\end{proposition}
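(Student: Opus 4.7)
The plan is to translate the stabilizer condition into an identity in the group algebra $\bC[S_n]$, reduce it to the statement that $\sigma$ centralizes $C^2$, and then enumerate that centralizer explicitly.

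Using \eqref{eq:ActionMatr}, the defining condition $M_{(\sigma,\tau)} = M_{(\id,\id)}$ becomes the equality
\[
  \sigma^{-1}(C - C^{-1})\tau = C - C^{-1}
\]
in $\bC[S_n]$, i.e.\ $\sigma^{-1} C \tau - \sigma^{-1} C^{-1} \tau = C - C^{-1}$. Since $n \ge 3$ implies $C \ne C^{-1}$, no cancellation can occur on either side, and matching the $+1$-term with the $+1$-term and similarly for the $-1$-terms forces $\sigma^{-1} C \tau = C$ and $\sigma^{-1} C^{-1} \tau = C^{-1}$. The first gives $\tau = C^{-1}\sigma C$, and substitution into the second gives $\sigma C^2 = C^2 \sigma$. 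Conversely, any such pair plainly stabilises $\nu_2$. Thus $\cG$ is in bijection with $\operatorname{Cent}_{S_n}(C^2)$ via $\sigma \mapsto (\sigma,\, C^{-1}\sigma C)$.

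The case $n$ odd is now immediate: $C^2$ is an $n$-cycle (since $\gcd(2,n)=1$), so $\operatorname{Cent}_{S_n}(C^2) = \langle C^2\rangle = \langle C\rangle$, and $\sigma = C^k$ gives $\tau = C^{-1}C^k C = C^k$, yielding part (1). For $n = 2\ell$ even, $C^2 = C_1 C_2$ is a product of two disjoint $\ell$-cycles, and the standard description of the centralizer of a permutation of cycle type $(\ell,\ell)$ gives
\[
  \operatorname{Cent}_{S_n}(C^2) = \langle C_1, C_2\rangle \;\sqcup\; \delta_1 \langle C_1, C_2\rangle,
\]
an order-$2\ell^2$ group; here $\delta_1$ centralizes $C^2$ and swaps its two orbits, both facts following from Lemma \ref{Lm:EvenRel}(3). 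Lemma \ref{Lm:EvenRel}(2) yields $C^{-1} C_1 C = C_2$ and $C^{-1} C_2 C = C_1$, so for $\sigma = C_1^u C_2^v$ one gets $\tau = C^{-1}\sigma C = C_2^u C_1^v = C_1^v C_2^u$. For $\sigma = \delta_1 C_1^u C_2^v$, Lemma \ref{Lm:EvenRel}(4) additionally gives $C^{-1}\delta_1 C = \delta_2$, so $\tau = \delta_2 C_1^v C_2^u$. This is precisely the list of pairs in part (2).

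The remaining group-theoretic claims are straightforward. The map $(u,v) \mapsto (C_1^u C_2^v,\, C_1^v C_2^u)$ from $\bZ_\ell \times \bZ_\ell$ is a homomorphism (because $C_1, C_2$ commute) whose image is the first type of element in part (2), giving an abelian subgroup of index $2$. Non-commutativity of $\cG$ itself can be witnessed by $(C_1,C_2)\cdot(\delta_1,\delta_2)$ versus $(\delta_1,\delta_2)\cdot(C_1,C_2)$, which differ because of the swap relation $\delta_1 C_1 = C_2 \delta_1$ from Lemma \ref{Lm:EvenRel}(3). The principal obstacle is really the bookkeeping in the first step: once one convinces oneself that for $n \ge 3$ no cancellation among $\{\sigma^{-1} C \tau,\, \sigma^{-1} C^{-1} \tau,\, C,\, C^{-1}\}$ is possible --- hence forcing the two coefficient-matching equations --- the rest of the argument is driven mechanically by the commutation relations of Lemma \ref{Lm:EvenRel}.
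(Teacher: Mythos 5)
Your proposal is correct and follows essentially the same route as the paper: both translate the stabilizer condition through \eqref{eq:ActionMatr}, split the matrix equality into the two equations $\sigma^{-1}C\tau=C$ and $\sigma^{-1}C^{-1}\tau=C^{-1}$ (the paper does this by comparing $u_{\tau(i)+1}-u_{\tau(i)-1}=u_{\sigma(i+1)}-u_{\sigma(i-1)}$ entrywise), deduce commutation with $C^2$, and then compute the centralizer of $C^2$ separately for $n$ odd and $n=2\ell$ via Lemma \ref{Lm:EvenRel}. The only nitpick is that the ``no cancellation'' step really rests on $C^2$ being fixed-point-free (so that $\sigma^{-1}C\tau$ and $\sigma^{-1}C^{-1}\tau$ disagree at every point), not merely on $C\neq C^{-1}$; this holds for all $n\geq 3$, so the argument stands.
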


\begin{proof}
  An immediate check using \eqref{eq:ActionMatr} shows that 
  $M_{(\si,\tau)}= M_ {(C \cdot \si, C \cdot \tau)}$ for all $n$. (It
  also  follows from the obvious fact that the polygonal line $\Ga_a$
  does not change under the cyclic shift of the points $a_1 \mapsto a_2
  \DT\mapsto a_n \mapsto a_1$.) For even $n = 2\ell$, the same formula
  and Lemma \ref{Lm:EvenRel} additionally imply that $M_{(\si,\tau)}=
  M_ {(C_1\cdot \si, C_2\cdot \tau)}$, $M_{(\si,\tau)}= M_{(C_2\cdot
    \si, C_1\cdot \tau)}$ and $M_{(\si,\tau)} = M_{(\delta_1\cdot \si,
    \delta_2 \cdot\tau)}$.

  Assume now that
  \begin{equation}\label{eq:extra}
    M_{(\si,\tau)}=M_{(\si^\prime,\tau^\prime)}.
  \end{equation}
  Observe that relabelling the variables one can, without loss of
  generality, choose $\si = \tau = \id$, where $\id$ is the identity
  permutation. In the representation-theoretical notation, formula
  \eqref{eq:extra} is equivalent to $\mP[\sigma^{-1} (C - C^{-1})
    \tau] = \mP[C - C^{-1}]$, that is, $\mP[(C - C^{-1}) \tau] =
  \mP[\sigma (C - C^{-1})]$.

  If $u_1 \DT, u_n \in \bC^n$ is the standard basis, then the latter
  equation means that $$\mP[(C - C^{-1}) \tau](u_i) = u_{\tau(i)+1} -
  u_{\tau(i)-1} = \mP[\sigma (C - C^{-1})](u_i) = u_{\si(i+1)} -
  u_{\si(i-1)}$$ for all $i = 1 \DT, n$. In other words, $\tau(i)+1 =
  \sigma(i+1)$ for all $i$, that is, $\tau\cdot C=C\cdot \si$ and
  $C\cdot \tau=\si\cdot C$. These relations imply
  \begin{equation}\label{eq:add} 
    C^2\cdot\tau = C\cdot\si\cdot C = \tau\cdot C^2.
  \end{equation}
For $n=2\ell+1$, one has
\begin{equation*}
  (C^2)^{\ell+1}\cdot\tau=\tau\cdot (C^2)^{\ell+1} \Leftrightarrow C\cdot \tau=\tau\cdot C.
\end{equation*}
Since $C$ only commutes with its own powers, one obtains $\tau=C^k$
for some $k=0,\dots, n-1$ implying that $\si=C^k=\tau$.

Consider now the case $n=2\ell$. Then $C^2 = C_1 \cdot C_2$ (a product
of two independent cycles). Set $\mathcal E_1 = \{1, 3 \DT,
2\ell-1\}$ and $\mathcal E_2 = \{2, 4 \DT, 2\ell\}$.  
Since $\tau$ commutes with $C_1\cdot C_2$, and the subgroups of
$S_n$ generated by $C_1$ and $C_2$ act transitively on $\mathcal E_1$
and $\mathcal E_2$ respectively, one has that either $\tau(\mathcal E_1) =
\mathcal E_1$ and $\tau(\mathcal E_2) = \mathcal E_2$ or
$\tau(\mathcal E_1) = \mathcal E_2$ and $\tau(\mathcal E_2) = \mathcal
E_1$.

In the first case the restrictions of $\tau$ to $\mathcal E_1$ and
$\mathcal E_2$ commute with the cycles $C_1$ and $C_2$, respectively,
and therefore $\tau = C_1^u \cdot C_2^v$ and $\sigma = C\cdot \tau
\cdot C^{-1} = C_1^v \cdot C_2^u$. In the second case the same
reasoning holds for the permutations $\tilde\tau \bydef \delta_2\cdot
\tau$, so $\tau = \delta_2 \cdot C_1^u \cdot C_2^v$ and $\sigma = C
\cdot \tau \cdot C^{-1} = \delta_1 \cdot C_1^v\cdot C_2^u$.
\end{proof} 

Proposition \ref{Pp:11} and assertion \ref{It:GenNu2} of Theorem
\ref{Th:Both} imply the following claim:

\begin{corollary}
  $\widetilde\F_n \subset \bC[\zz,\bzz]^\cG$, where $\cG \subset S_n
  \times S_n$ is the stabilizer group of $\nu_2$ described in
  Proposition \ref{Pp:11}.
\end{corollary}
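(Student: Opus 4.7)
The plan is to deduce the corollary directly from assertion~\ref{It:GenNu2} of Theorem~\ref{Th:Both} combined with the very definition of $\cG$. By that assertion, $\widetilde\F_n = H(\nu_2)$, where $H = \bC(\zz,\bzz)^{S_n \times S_n}$; thus every element $f \in \widetilde\F_n$ can be written as a polynomial in $\nu_2$ of degree less than $d_n$ with coefficients in $H$ (equivalently, a rational function in $\nu_2$ over $H$). The $S_n \times S_n$-action on $\bC(\zz,\bzz)$ is given by permutation of variables and preserves the subring $\bC[\zz,\bzz]$, so $\cG$-invariance is well-defined on both the polynomial and rational levels.

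Two elementary verifications then suffice. First, $\cG$ fixes $H$ pointwise: indeed $\cG \subset S_n \times S_n$ by Proposition~\ref{Pp:11}, and every element of $H$ is by definition fixed by the full group $S_n \times S_n$. Second, $\cG$ fixes $\nu_2$: this is the very definition of $\cG$ as the stabilizer of the matrix $M_{(\id,\id)}$ from \eqref{eq:matrix}, equivalently of the bilinear form $\nu_2(\zz,\bzz)$ itself, under the $S_n \times S_n$-action on $\mO_n$. Putting the two together, any expression in $\nu_2$ with coefficients from $H$ is $\cG$-invariant, giving $\widetilde\F_n \subset \bC(\zz,\bzz)^\cG$ and, restricting to polynomials, the stated inclusion.

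The substantive content is carried entirely by Theorem~\ref{Th:Both}(iii); once that primitive-element description is granted, the corollary is a one-line consequence and presents no real obstacle. I would moreover expect this inclusion to be an equality: a comparison of the index $[S_n \times S_n : \cG]$ with the extension degree $d_n$ from assertion~\ref{It:GenNu2} should show that $\widetilde\F_n = \bC(\zz,\bzz)^\cG$, so that $\cG$ ends up identified as the Galois group of $\widetilde\F_n / H$ in the development of Section~\ref{sec:galois}.
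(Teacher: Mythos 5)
Your proof is correct and is essentially the paper's own (unwritten) argument: the authors state the corollary as an immediate consequence of Proposition~\ref{Pp:11} and assertion~\ref{It:GenNu2} of Theorem~\ref{Th:Both}, and your two observations---that $\cG$ fixes $H$ pointwise and fixes $\nu_2$ by definition of the stabilizer---are exactly what is needed; you are also right that the containment should really be read in $\bC(\zz,\bzz)^{\cG}$ (a field cannot sit inside the polynomial ring $\bC[\zz,\bzz]^{\cG}$; the paper's later remark about $\widetilde\RR_n\subset\bC[\zz,\bzz]^{\cG}$ confirms the intended reading). One caveat on your closing speculation: the degree comparison does yield the equality $\widetilde\F_n=\bC(\zz,\bzz)^{\cG}$, but this identifies $\cG$ with $\name{Gal}(\bC(\zz,\bzz)/\widetilde\F_n)$ under the Galois correspondence for $\bC(\zz,\bzz)/H$, not with a Galois group of $\widetilde\F_n/H$ --- as Section~\ref{sec:galois} points out, that extension is not Galois.
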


Let us now settle Theorem \ref{Th:Both}.

\begin{proof}
  Assertions \ref{It:Gen} and \ref{It:SymmSubF} are proved similarly
  to the corresponding statements in Theorem \ref{Th:HarmOnly} about
  the field $\F_n$.

  To prove assertion \ref{It:GenNu2} set
  \begin{equation}\label{Eq:MinimalPol}
    P(t) \bydef  \prod_{(\sigma, \tau)\in (S_n\times S_n)/\cG}(t-{\nu_2}_{(\si, \tau)}).
  \end{equation}
  Here the index $(\si, \tau)$ runs over a system of representatives
  of the right cosets of $S_n\times S_n$ with respect to the
  stabilizer subgroup $\cG$. Thus $\deg P$ is equal to the number of
  these right cosets, that is, to $(n!)^2/\vert \cG\vert$. By
  Proposition~\ref{Pp:11}, one has $\deg P = n!(n-1)!$ for $n$ odd and
  $\deg P = 2((n-1)!)^2$ for $n$ even.

  Let us prove that $P(t)$ is the minimal polynomial defining $\nu_2$
  over the field $\bC(\zz, \bzz)^{S_n\times S_n}$; it is enough to
  show that $P$ is irreducible.
  
  Indeed, assume that $P(t)$ is reducible and $Q(t)=\prod_{(\si,
    \tau)\in U}(t-{\nu_2}_{(\si, \tau)})$ is its irreducible factor
  where $U$ is some proper subset of $(S_n\times S_n)/\cG$. Thus $\deg
  Q = \# U < \# (S_n\times S_n)/\cG$. The coefficients of the
  polynomial $Q$ are $S_n \times S_n$-invariant, so for any $(\si,
  \tau)\in U$, the element ${\nu_2}_{(\si, \tau)}$ must be a root of
  $Q$. By Proposition~\ref{Pp:11}, this implies that $U$ intersects
  any right coset in $(S_n\times S_n)/\cG$. Thus $\deg Q \ge \#
  (S_n\times S_n)/\cG$. Contradiction.
  
  Now let us show that for generic $(\zz,\bzz)$, all the roots of the
  polynomial $P$ defined by  \eqref{Eq:MinimalPol} are  simple.

  \begin{lemma}\label{Lm:AllSimple}
    For generic $(\zz, \bzz)$, the values of all bilinear forms
    ${\nu_2}_{(\si,\tau)}(\zz,\bzz)$ are pairwise distinct, where
    $(\si,\tau)$ runs over all right cosets $(S_n \times S_n)/\cG$
    with respect to the stabilizer group $\cG$.
  \end{lemma}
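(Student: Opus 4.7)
The plan is to reduce the lemma to the statement that the bilinear forms ${\nu_2}_{(\si,\tau)}$, as $(\si,\tau)$ runs over the right cosets of $\cG$ in $S_n\times S_n$, are pairwise distinct as \emph{polynomials} in $(\zz,\bzz)$, and then invoke a standard Zariski-openness argument.

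For the polynomial distinctness, I would use the explicit description \eqref{eq:ActionMatr}, which says that ${\nu_2}_{(\si,\tau)}$ is the bilinear form with matrix $M_{(\si,\tau)} = \mP[\si^{-1}(C-C^{-1})\tau]$. A bilinear form is determined by its matrix, so two such forms coincide if and only if the corresponding matrices coincide. But by definition $\cG$ is the stabilizer of $M_{(\id,\id)}$ under the $S_n\times S_n$-action on $\wMM(n)$, so $M_{(\si_1,\tau_1)}=M_{(\si_2,\tau_2)}$ if and only if $(\si_1,\tau_1)$ and $(\si_2,\tau_2)$ represent the same right coset of $\cG$; Proposition \ref{Pp:11} has already determined $\cG$ completely. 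Hence distinct cosets give pairwise distinct bilinear forms ${\nu_2}_{(\si,\tau)}(\zz,\bzz)$.

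Once that is established, I would argue genericity as follows. For every unordered pair of distinct cosets, represented by $(\si_1,\tau_1)$ and $(\si_2,\tau_2)$, the difference
\[
F_{(\si_1,\tau_1),(\si_2,\tau_2)}(\zz,\bzz) \bydef {\nu_2}_{(\si_1,\tau_1)}(\zz,\bzz) - {\nu_2}_{(\si_2,\tau_2)}(\zz,\bzz)
\]
is a nonzero bilinear form in $(\zz,\bzz)$, and hence its zero set $Z_{(\si_1,\tau_1),(\si_2,\tau_2)}$ is a proper Zariski-closed subset of $\bC^n\times\bC^n$. The collection of coset pairs is finite, so
\[
Z \bydef \bigcup_{(\si_1,\tau_1)\neq (\si_2,\tau_2)\,\text{mod}\,\cG} Z_{(\si_1,\tau_1),(\si_2,\tau_2)}
\]
is again a proper Zariski-closed subset. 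Any $(\zz,\bzz)$ in its Zariski-open dense complement has the property that all the values ${\nu_2}_{(\si,\tau)}(\zz,\bzz)$ are pairwise distinct.

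There is essentially no obstacle here: the substantive work has already been carried out in Proposition \ref{Pp:11}, which pins down exactly which pairs $(\si,\tau)$ leave the matrix $M_{(\id,\id)}$ (and hence the polynomial $\nu_2$) fixed. The only mildly delicate point is to make sure that ``distinct as bilinear forms'' is read off correctly from ``distinct as matrices'', which is immediate from the fact that the representation of a bilinear form in a fixed basis is unique.
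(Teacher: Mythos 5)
Your proposal is correct and follows essentially the same route as the paper: the paper also observes that the loci $\{{\nu_2}_{(\si,\tau)}={\nu_2}_{(\si',\tau')}\}$ are quadrics which, for representatives of distinct cosets, are proper (precisely because $\cG$ is the stabilizer computed in Proposition \ref{Pp:11}), and that finitely many proper quadrics cannot cover $\bC^n\times\bC^n$. The only difference is presentational — the paper argues by contradiction while you phrase it as a Zariski-open dense complement — so there is nothing to add.
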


  \begin{proof}
    Indeed, if it is not the case, then $\bC^n \times \bC^n$ is a
    union of finitely many sets $L_{\si,\tau,\si',\tau'} \bydef
    \{(\zz,\bzz) \mid {\nu_2}_{(\si,\tau)}(\zz,\bzz) =
    {\nu_2}_{(\si',\tau')}(\zz,\bzz)\}$. The functions
    ${\nu_2}_{(\si,\tau)}$ are bilinear forms, so
    $L_{\si,\tau,\si',\tau'}$ are quadrics. A vector space over
    $\bC$ cannot be a union of finitely many nontrivial quadrics, so
    $L_{\si,\tau,\si',\tau'} = \bC^n \times \bC^n$ for some
    $\si,\tau,\si',\tau'$. But then $(\si,\tau) = (\si',\tau') \bmod
    \cG$, which contradicts to the choice of $(\si,\tau)$ and
    $(\si',\tau')$ (one element from every right coset). The lemma
    follows.
  \end{proof}

  Fix some generic $c_1 \DT, c_n$ and  $d_1 \DT, d_n$. Then the set
  \begin{equation*}
    \{(\zz,\bzz) \mid e_j(\zz) = c_j, e_j(\bzz) = d_j, j = 1 \DT, n\}
  \end{equation*}
  is a generic $S_n \times S_n$-orbit in $\bC^n \times \bC^n$, where
  $e_j$ is the $j$-th elementary symmetric function. By
  Lemma~\ref{Lm:AllSimple}, the values of $\nu_2$ at different points
  of the orbit are distinct, so the values $e_1(\zz) \DT, e_n(\zz),
  e_1(\bzz) \DT, e_n(\bzz)$ and $\nu_2(\zz,\bzz)$ determine the point
  $(\zz, \bzz)$ completely, and therefore, determine the values
  $\nu_j(\zz,\bzz)$ for all $j = 3, 4, \dots$.

  Fix some $j$, and let $Y \subset \bC^{2n+1}$ be the closure of the
  set
  \begin{equation*}
    \{(e_1(\zz) \DT, e_n(\zz), e_1(\bzz) \DT, e_n(\bzz),
    \nu_2(\zz,\bzz)) \in \bC^{2n+1} \mid \zz, \bzz \in \bC^n\}.    
  \end{equation*}
  Introduce the algebraic variety
  \begin{multline*}
    \Ga \bydef \{(y,c) \in Y \times \bC \mid \exists \zz, \bzz \in
    \bC^n: y = (e_1(\zz) \DT, e_n(\bzz), \nu_2(\zz,\bzz)), c =
    \nu_j(\zz,\bzz)\}\\
    \subset \bC^{2n+1}.
  \end{multline*}
  By Lemma \ref{Lm:AllSimple}, the projection map $p: \Gamma \to
  \bC^{2n+1}$ given by $p(y,c) \bydef y$ is generically one-to-one
  onto its image. Hence there exists a rational map $\tilde R:
  \bC^{2n+1} \to \Ga$ such that $\left.\tilde R\right|_{p(\Ga)} =
  p^{-1}$, see \cite{Mumford}. If $R \bydef q \circ \tilde R$, where
  $q: \bC^{2n+1} \times \bC \to \bC$ is the standard projection then
  $\nu_j(\zz,\bzz) = R(e_1(\zz) \DT, e_n(\zz), e_1(\bzz) \DT,
  e_n(\bzz), \nu_2(\zz,\bzz))$.

  Thus we have shown that
  \begin{equation*}
    \nu_j \in \bC(e_1(\zz) \DT, e_n(\zz), e_1(\bzz) \DT,
    e_n(\bzz))(\nu_2) = \bC(\zz,\bzz)^{S_n \times S_n}(\nu_2).
  \end{equation*}
  On the other hand, it follows from assertion \ref{It:Gen} that
  \begin{equation*}
    \widetilde\F_n = \bC(e_1(\zz) \DT, e_n(\bzz), \nu_2, \nu_3, \bar
    \nu_3 \DT, \nu_{2n-1}, \bar \nu_{2n-1}),
  \end{equation*}
  which implies $\widetilde\F_n = \bC(e_1(\zz) \DT, e_n(\bzz), \nu_2)
  = \bC(\zz,\bzz)^{S_n \times S_n}(\nu_2)$.
\end{proof} 

For any nonnegative integer $k$, denote by $Y_k \subset \bC^{2n+2k+1}$ the closure of the set
\begin{multline*}
  \{(e_1(\zz) \DT, e_n(\zz), e_1(\bzz) \DT, e_n(\bzz),
  \nu_2(\zz,\bzz), \nu_3(\zz,\bzz), \bar \nu_3(\zz,\bzz) \DT,\\
  \nu_{k-2}(\zz,\bzz), \bar \nu_{k-2}(\zz,\bzz)) \mid \zz, \bzz \in
  \bC^n\} \subset \bC^{2n+2k+1}.
\end{multline*}
Using this notation, assertion \ref{It:GenNu2} of Theorem
\ref{Th:Both} can be reformulated as follows.

\begin{corollary}
  For every positive integer $k$, the variety $Y_k$ is birationally
  equivalent to $Y_0$.
\end{corollary}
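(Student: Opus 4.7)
The corollary is essentially an immediate consequence of assertion \ref{It:GenNu2} of Theorem \ref{Th:Both}. The plan is to identify the function fields of $Y_0$ and $Y_k$ and observe they both equal $\widetilde\F_n$, from which birational equivalence follows automatically.

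In more detail: both $Y_k$ and $Y_0$ are irreducible, as Zariski closures of images of the irreducible affine space $\bC^n \times \bC^n$ under polynomial maps. Pulling back coordinate functions via the parametrizing map, one sees that $\bC(Y_0)$, embedded as a subfield of $\bC(\zz, \bzz)$, is generated over $\bC$ by $e_1(\zz), \dots, e_n(\zz), e_1(\bzz), \dots, e_n(\bzz), \nu_2(\zz,\bzz)$. Since the elementary symmetric polynomials generate $\bC(\zz,\bzz)^{S_n\times S_n}$ as a field over $\bC$, assertion \ref{It:GenNu2} of Theorem \ref{Th:Both} gives $\bC(Y_0) = \widetilde\F_n$. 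Similarly, $\bC(Y_k)$ is generated by the same $2n+1$ functions together with the additional moments $\nu_j, \bar\nu_j$ for $j$ in the appropriate range; since each of these additional moments already lies in $\widetilde\F_n$, one obtains $\bC(Y_k) = \bC(Y_0) = \widetilde\F_n$, whence the birational equivalence.

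Explicit mutually inverse birational maps can be read off from this field identification: from $Y_k$ to $Y_0$, simply project onto the first $2n+1$ coordinates; from $Y_0$ to $Y_k$, append to a generic point the values of the rational functions (in the $2n+1$ coordinates) furnished by Theorem \ref{Th:Both}(iii) that express each extra moment in terms of $e_1(\zz), \dots, e_n(\bzz), \nu_2$. The only step that would require a moment of verification is that the denominators of these rational functions do not vanish identically on $Y_0$; but this is automatic, since on the dense parametrized subset of $Y_0$ they return the finite polynomial values $\nu_j(\zz,\bzz)$ and $\bar\nu_j(\zz,\bzz)$, so their polar loci are proper closed subvarieties of $Y_0$.
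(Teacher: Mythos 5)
Your argument is correct and matches the paper's intent exactly: the paper offers no separate proof, presenting the corollary as a direct reformulation of assertion (iii) of Theorem \ref{Th:Both}, and your write-up simply unpacks why that assertion yields the equality of function fields $\bC(Y_k)=\bC(Y_0)=\widetilde\F_n$ and hence the birational equivalence via projection. Nothing further is needed.
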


\begin{remark}
  Denote by $\widetilde\RR_n \bydef \bC[\nu_2, \nu_3, \bar \nu_3,
    \dots]$ the ring extension generated by all harmonic and
  anti-harmonic moments. Although $\bC(\zz, \bzz)^{S_n\times S_n}
  \subset \widetilde\F_n$, it is not true that $\bC[\zz,
    \bzz]^{S_n\times S_n}\subset \widetilde \RR_n$ because the
  elementary symmetric functions are only expressed as rational
  functions of the moments. On the other hand, the inclusion
  $\widetilde \RR_n\subset\bC[\zz, \bzz]^\cG$ obviously holds.

  Similarly to Theorem \ref{Th:Both} the same circumstance (i.e.\ the
  presence of a denominator in formulas) does not allow us to conclude
  that $\nu_2, \nu_3, \bar\nu_3 \DT, \nu_{2n-1}, \bar\nu_{2n-1}$
  generate $\widetilde\RR_n$. Probably (though we have not yet proved
  this) the situation is similar to assertion \ref{It:InfiniteGen} of
  Theorem \ref{Th:HarmOnlyRing}: the ring cannot be generated by any
  proper subset of $\nu_j, \bar\nu_j$, $j = 2, 3, \dots$. Also we can
  conjecture that an analog of assertion \ref{It:Localize} of the same
  theorem holds: all the denominators in question are powers of a
  single polynomial $\widetilde\DD_n$.
\end{remark}

\begin{remark}
  Formulas \eqref{eq:expl} also show that ${\nu_j}_{(\xi,\xi)} = -\nu_j$
  for all $j$, where $\xi$ is an involution reading the sequence $(12
  \dots n)$ in the opposite direction: $\xi = (1,n)(2,n-1)
  \dots$. Together with a cyclic group $\bZ_n$ with a generator
  $(C,C)$ the involution $\xi$ generates the dihedral group.
\end{remark}

\section{Galois group of the equation satisfied by $\nu_2$}\label{sec:galois}

Assertion \ref{It:GenNu2} of Theorem~\ref{Th:Both} claims that the
minimal polynomial $P(t)$ for the element $\nu_2$ generates the
algebraic extension of the field $\bC[\zz,\bzz]^{S_n\times S_n}$ of
degree
\begin{equation*}
  d_n \bydef \begin{cases}
    n!(n-1)! &\text{if $n$ is odd,}\\
    2((n-1)!)^2 &\text{if $n$ is even.}
  \end{cases}
\end{equation*}
This extension is not Galois; its Galois closure is the field generated
by all the roots of $P$, that is, by ${\nu_2}_{(\si,\tau)}$ for all
$\si, \tau \in S_n$. In this section we calculate the Galois group of
the closure, or, equivalently, the Galois group of the polynomial $P(t)$. To do this,
we need to describe the algebraic dependencies between the polynomials
${\nu_2}_{(\si,\tau)}$; by definition, the Galois group of $P(t)$ is
the subgroup of $S_n \times S_n$ preserving all these dependencies.

Denote by $\fM_n$ the linear span of the set of all $n \times
n$-matrices $M_{(\si,\tau)}$ where $(\si,\tau)\in S_n\times S_n$,  see equation \eqref{eq:matrix} above and
the text following it.

\begin{lemma}\label{lm:span}
  For any $n\ge 3$, the space $\fM_n\subset \name{Mat}_n$ consists of all
  $n \times n$-matrices with vanishing row and column sums for each row
  and column.
\end{lemma}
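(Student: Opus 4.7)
The plan is to prove both inclusions of sets. The inclusion $\fM_n \subseteq V$, where $V$ denotes the space of $n \times n$ matrices with vanishing row sums and vanishing column sums, is immediate: each matrix $M_{(\si,\tau)}$ has exactly one $+1$ and one $-1$ in every row and every column, and this property is preserved under $\bC$-linear combinations. Since $\dim V = n^2 - (2n-1) = (n-1)^2$ (the $n$ row-sum conditions and $n$ column-sum conditions are tied by the single identity that the sum of row sums equals the sum of column sums), it suffices to establish the lower bound $\dim \fM_n \geq (n-1)^2$.

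For the reverse inclusion, the plan is to identify $V$ with $\name{End}(U)$, where $U = \{v \in \bC^n : \sum_i v_i = 0\}$ is the standard $(n-1)$-dimensional irreducible representation of $S_n$. Any $A \in V$ kills the all-ones vector $\mathbf 1$ and maps $\bC^n$ into $U$, so it descends to an endomorphism of $U \cong \bC^n/\bC\mathbf 1$; a dimension count shows this gives an isomorphism $V \simeq \name{End}(U)$. Under this isomorphism, formula \eqref{eq:ActionMatr} translates $\fM_n$ into the linear span of $\{\mP[\si^{-1}]|_U \circ Y \circ \mP[\tau]|_U : \si, \tau \in S_n\}$, where $Y \bydef \mP[C - C^{-1}]|_U$.

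To conclude, I would invoke the Burnside density theorem: because $U$ is irreducible over $\bC$, the span of $\{\mP[\rho]|_U : \rho \in S_n\}$ is already all of $\name{End}(U)$. Hence the displayed span coincides with the two-sided ideal in $\name{End}(U)$ generated by $Y$, and by simplicity of the matrix algebra this ideal is either $0$ or all of $\name{End}(U)$. The only remaining step, and essentially the only place where the hypothesis $n \geq 3$ is used, is to verify that $Y \neq 0$. This amounts to $\mP[C^2]|_U \neq \id_U$, which holds because $C^2 \neq \id$ in $S_n$ for $n \geq 3$ combined with the faithfulness of the standard representation. I expect no genuine obstacle; the real content of the proof is translating the combinatorial statement into one about an irreducible representation so that Burnside's theorem can be applied.
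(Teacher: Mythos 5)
Your proof is correct and follows essentially the same route as the paper: both arguments split the permutation representation into the trivial summand (which forces the vanishing row and column sums) and the standard irreducible $(n-1)$-dimensional piece, and then invoke the Burnside/Wedderburn fact that the group algebra surjects onto $\name{End}(U)$. The only difference is cosmetic: where the paper explicitly constructs operators $Y_\si$ from a basis of the span of the images $\mP[\si^{-1}(C-C^{-1})](V)$, you package the same conclusion as the statement that the two-sided ideal of the simple algebra $\name{End}(U)$ generated by the nonzero element $\mP[C-C^{-1}]|_U$ is everything, which is arguably cleaner.
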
 

\begin{proof}
  Recall that $\mP$ is the standard $n$-dimensional permutation
  representation of the group $S_n$. By equation \eqref{eq:ActionMatr}
  the matrix $M_{(\si,\tau)}$ belongs to the image of $\mP$. The
  representation $\mP$ is reducible; it splits into the trivial
  $1$-dimensional representation in the subspace $V_0 \subset \bC^n$
  spanned by the vector $v_0 = (1, 1 \DT, 1)$ and the irreducible
  representation of dimension $(n-1)$ in the space $V \bydef \{(z_1
  \DT, z_n) \mid z_1 \DT+ z_n = 0\}$. So both $V_0$ and $V$ are
  invariant subspaces of all the $M_{(\si,\tau)}$.
  
  The representation of $S_n$ on $V_0$ is trivial, so
  $\left.C\right|_{V_0} = \id$, so that $M_{(\si,\tau)} = 0$ on the
  space $V_0$. Therefore the sum of matrix elements of
  $M_{(\si,\tau)}$ in every row and column vanishes.

  According to \eqref{eq:ActionMatr} the lemma is equivalent to the
  following statement: for every linear operator $X: V \to V$ there
  exist constants $a_{\si,\tau} \in \bC$ such that $\sum_{\si,\tau \in
    S_n} a_{\si,\tau} \mP[\sigma^{-1} (C - C^{-1}) \tau] = X$. A
  standard  result in representation theory  says that the image of
  an irreducible representation of the group algebra of any finite
  group is the full matrix algebra of the representation space; so,
  for any linear operator $Y: V \to V$ there exist constants $a_\tau,
  \tau \in S_n$ such that $\sum_{\tau \in S_n} a_\tau \mP[\tau] = Y$
  on $V$, see e.g.  \cite{Repr}. Now Lemma~\ref{lm:span} is equivalent to the 
  statement that 
 for any linear operator $X: V \to V$, there exist operators
    $Y_\si, \si \in S_n$ such that 
    \begin{equation}\label{Eq:SumYSigma}
      \sum_{\si \in S_n} \mP[\si^{-1} (C - C^{-1})] Y_\si = X.
    \end{equation}

To prove this claim observe that  the operator $M_0 \bydef \mP[C - C^{-1}]: V \to V$ is nonzero; so,
  the linear hull $W \subset V$ of the spaces $\mP[\si^{-1} (C -
    C^{-1})](V) \subset V$, $\si \in S_n$, is $S_n$-invariant and
  nonzero. The representation $V$ is irreducible which implies that $W =
  V$. In other words, there exist not necessarily distinct permutations $\si_1 \DT, \si_{n-1}
  \in S_n$ and vectors $w_1 \DT, w_{n-1}
  \in V$ such that the vectors $v_i = \mP[\si_i^{-1} (C -
    C^{-1})](w_i)$, $i = 1 \DT, n-1$, form a basis in $V$.

  Let  $u_{ij}, i,j = 1 \DT, n-1$ be the constants such that $X(v_j) =
  \sum_{i=1}^{n-1} u_{ij} w_i$ for all $j$. Define  the operators
  $Y_\si, \si \in S_n$ given by:
  \begin{equation*}
    Y_\si(e_j) = \sum_{i: \si_i = \sigma} u_{ij} w_i. 
  \end{equation*}
  (if no $\si_i$ equals $\si$, then $Y_\si = 0$ and it does not enter
  \eqref{Eq:SumYSigma}). An immediate check shows that
  \eqref{Eq:SumYSigma} holds and the claim follows. Lemma~\ref{lm:span} is settled. 
\end{proof}

To move further, for $1 \le i,j \le n-1$, denote by $\BM_{ij} \in
\fM_n$ the matrix whose entries equal to $1$ at positions $(i,j)$ and
$(n,n)$, to $-1$ at positions $(i,n)$ and $(n,j)$, and vanish
elsewhere. The next lemma is obvious:

\begin{lemma}\label{cor:dim}
  \begin{enumerate}
    \item For any $n\ge 3$, $\dim \fM_n=(n-1)^2$.
    \item For any $n\ge 3$, matrices $\BM_{i,j}$ with $1\le i\le
      n-1;\, 1 \le j \le n-1$, form a basis in $\fM_n$.
  \end{enumerate}
\end{lemma}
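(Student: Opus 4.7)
The plan is to reduce both assertions to the single observation that the restriction map
\[
\rho : \fM_n \to \name{Mat}_{n-1}(\bC), \qquad M \mapsto (M_{i,j})_{1\le i,j\le n-1}
\]
is a linear isomorphism. Granted this, assertion (i) is immediate since $\dim \name{Mat}_{n-1}(\bC) = (n-1)^2$, and assertion (ii) follows because the construction of $\BM_{i,j}$ makes it clear that $\rho(\BM_{i,j})$ is the standard matrix unit $E_{i,j} \in \name{Mat}_{n-1}(\bC)$, and the $E_{i,j}$ form a basis.

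To check that $\rho$ is an isomorphism I would use Lemma~\ref{lm:span}, which identifies $\fM_n$ with the space of $n\times n$ matrices whose every row sum and every column sum is zero. For injectivity: if $\rho(M) = 0$, then for $i < n$ the vanishing row sum forces $M_{i,n} = -\sum_{j<n} M_{i,j} = 0$; symmetrically $M_{n,j} = 0$ for $j < n$; finally $M_{n,n} = -\sum_{j<n} M_{n,j} = 0$. For surjectivity, given any $(a_{i,j})_{1\le i,j\le n-1}$, extend it by setting $a_{i,n} \bydef -\sum_{j<n} a_{i,j}$ and $a_{n,j} \bydef -\sum_{i<n} a_{i,j}$ and $a_{n,n} \bydef \sum_{i,j<n} a_{i,j}$; a one-line check shows that all row and column sums of the resulting $n\times n$ matrix vanish, so it belongs to $\fM_n$ and maps to the given block under $\rho$.

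Assertion (ii) is then just the remark that $\rho(\BM_{i,j}) = E_{i,j}$ by construction of $\BM_{i,j}$ (the nonzero entries at $(i,n),(n,j),(n,n)$ are precisely those forced by the extension procedure above applied to $E_{i,j}$). There is no genuine obstacle in this lemma; all of the substantive content has already been absorbed into Lemma~\ref{lm:span}, and what remains is a straightforward bookkeeping argument in elementary linear algebra.
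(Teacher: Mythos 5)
Your argument is correct and complete: the restriction-to-the-upper-left-block map is indeed a linear isomorphism from the space of matrices with vanishing row and column sums onto $\name{Mat}_{n-1}(\bC)$, and $\rho(\BM_{i,j})=E_{i,j}$ by inspection. The paper states this lemma without proof (calling it obvious), and your write-up is exactly the standard bookkeeping one would supply, correctly leaning on Lemma~\ref{lm:span} for the description of $\fM_n$.
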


Denote by $\A_n\subset \bC[\zz,\bzz]$ the subalgebra generated by the
bilinear forms ${\nu_2}_{(\si,\tau)}$ with $(\si,\tau)\in
S_n\times S_n$.

\begin{proposition}\label{prop:subalgebra}
  For any $n \ge 3$, one has
  \begin{equation}
    \A_n \simeq \frac{\bC[D]}{\langle I_2\rangle},
  \end{equation}
  where $D = (d_{ij})$ is a $(n-1)\times(n-1)$ matrix with variable
  entries $d_{ij}$ and $\langle I_2\rangle $ is the ideal generated
  by all $2 \times 2$-minors of $D$.
\end{proposition}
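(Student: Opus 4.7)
The plan is to identify $\A_n$ explicitly as the coordinate ring of an affine Segre variety, from which the presentation via $2\times 2$ minors follows immediately.

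First, I would compute the bilinear form associated to each basis matrix $\BM_{ij}$ of Lemma \ref{cor:dim}. Directly from the definition of $\BM_{ij}$,
\begin{equation*}
  \BM_{ij}(\zz,\bzz) = z_i \bar z_j + z_n \bar z_n - z_i \bar z_n - z_n \bar z_j = (z_i - z_n)(\bar z_j - \bar z_n).
\end{equation*}
Introducing the invertible linear change of coordinates $y_i \bydef z_i - z_n$ and $\bar y_j \bydef \bar z_j - \bar z_n$ for $i,j = 1 \DT, n-1$ (which together with $z_n, \bar z_n$ form an algebraically independent system), this becomes simply $\BM_{ij} = y_i \bar y_j$. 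By Lemma \ref{cor:dim}, every ${\nu_2}_{(\si,\tau)}$ is a $\bC$-linear combination of the $\BM_{ij}$, so $\A_n$ is generated as a $\bC$-algebra by the $(n-1)^2$ monomials $y_i \bar y_j$.

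Second, I would construct the obvious surjective algebra homomorphism $\pi \colon \bC[d_{ij}] \to \A_n$ sending $d_{ij} \mapsto y_i \bar y_j$. Every $2 \times 2$ minor $d_{ij} d_{k\ell} - d_{i\ell} d_{kj}$ lies in $\ker \pi$ since $y_i \bar y_j \cdot y_k \bar y_\ell = y_i \bar y_\ell \cdot y_k \bar y_j$, so $\pi$ descends to a surjection $\bar\pi \colon \bC[D]/\langle I_2\rangle \to \A_n$.

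The heart of the proof, and the only real obstacle, is showing that $\bar\pi$ is injective — equivalently, that no relations among the $y_i \bar y_j$ exist beyond those generated by $2\times 2$ minors. I would deduce this from a dimension count combined with the classical fact that $\langle I_2 \rangle$ is a prime ideal: it is precisely the defining ideal of the affine cone over the Segre embedding $\bCP^{n-2} \times \bCP^{n-2} \hookrightarrow \bCP^{(n-1)^2-1}$, so $\bC[D]/\langle I_2\rangle$ is a domain of Krull dimension $2n-3$. On the other hand, $\A_n$ is a subring of the polynomial ring $\bC[y_i, \bar y_j]$, hence itself a domain; its fraction field contains the $2n-3$ algebraically independent elements $y_1 \bar y_1$, the ratios $y_i/y_1 = (y_i \bar y_1)/(y_1 \bar y_1)$ for $i = 2 \DT, n-1$, and $\bar y_j/\bar y_1 = (y_1 \bar y_j)/(y_1 \bar y_1)$ for $j = 2 \DT, n-1$, so $\dim \A_n \ge 2n-3$. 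Since $\bar\pi$ is a surjection of domains of equal Krull dimension, its kernel is a prime ideal of height zero in the source, hence zero, and $\bar\pi$ is an isomorphism.
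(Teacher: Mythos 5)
Your proof is correct and follows essentially the same route as the paper: both reduce to the observation that $\BM_{ij} = (z_i - z_n)(\bar z_j - \bar z_n)$, so that after the linear change of variables $\A_n$ becomes the coordinate ring of (the affine cone over) the Segre embedding of $\bCP^{n-2}\times\bCP^{n-2}$. The only difference is at the last step, where the paper simply cites the classical determinantal presentation of the Segre coordinate ring, while you re-derive it from the primality of $\langle I_2\rangle$ together with a Krull-dimension count — a valid, self-contained substitute.
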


\begin{proof}
  Observe that $\A_n$ is generated by the linear space of bilinear
  forms whose matrices (written in the basis $(\zz,\bzz)$) belong to
  $\fM_n$. For simplicity, we will identify bilinear forms with their
  matrices and denote this space by $\fM_n$ as well. By
  Lemma~\ref{cor:dim} the $(n-1)^2$ forms $\BM_{ij}$, $1 \le i,j \le
  n-1$ constitute a basis for $\fM_n$. Explicitly
  \begin{equation*}
    \BM_{i,j}=z_i\bar z_j - z_i\bar z_n - z_n\bar z_j + z_n\bar z_n =
    (z_i - z_n)(\bar z_j - \bar z_n).
  \end{equation*}
  One can easily check the equalities
  \begin{equation}\label{Eq:Rel}
    \BM_{i_1,j_1}\BM_{i_2,j_2} - \BM_{i_1,j_2}\BM_{i_2,j_1} = 0
  \end{equation}
  coming from $2\times 2$-minors.

  So, $\A_n$ is isomorphic to the sub-algebra of $\bC[\zz,\bzz]$
  generated by $\BM_{ij}, 1 \le i,j \le n-1$. The substitution $u_i =
  z_i-z_n, i = 1\dots n-1, u_n = z_n$ and $v_i = \bar z_i-\bar z_n,
  i=1,\dots,n-1, v_n = \bar z_n$ shows that $\A_n$ is isomorphic to
  $\bC[u_iv_j,1\le i,j\le n-1]$.
 
  Now observe that $\bC[u_iv_j,1\le i,j\le n-1]$ is the coordinate
  ring of the Segre embedding $\mathbb {P}^{n-2}\times \mathbb
  {P}^{n-2}\rightarrow \mathbb{P}^{(n-1)^2-1}$, where
  \begin{equation*}
    ([u_1:\cdots:u_{n-1}],[v_1 \DT: v_{n-}])\mapsto [u_1v_1:u_1v_2
      \DT: u_{n-1}v_{n-1}],
  \end{equation*}
  If the coordinate ring of the target is $\bC[d_{ij}, 1\le i,j\le
    n-1]$, the image of $(u_i,v_j)$ is $d_{ij}$. It is well-known
  (e.g.\ see e.g.\ \cite[p.~14]{CST}), that the coordinate ring of the
  image is $S = \bC[d_{ij}]/I_2$, where $I_2$ is the ideal of all $2
  \times 2$-minors, which finishes the proof.
\end{proof}
  
The Hilbert series of $\A_n$ is given in \cite[p.~53]{BC} and is equal
to
\begin{equation}\label{Eq:Hilb}
  \mathcal H = \sum_{d=0}^\infty\binom{d+n-1}{n-1}^2t^d.
\end{equation}
  
It is also known that $\A_n$ is both Gorenstein and Koszul. The
Gorenstein property was first proved in \cite{GW}; the Koszul property
was first settled in \cite{BM}; see also \cite{BC}.

Consider now the map $\Theta_n: \bC[x_{(\si,\tau)}, \si, \tau \in S_n]
\to \bC[\zz,\bzz]$ which sends each variable $x_{(\si,\tau)}$ to
${\nu_2}_{(\si,\tau)}$. This map is graded and doubles the degree.

\begin{proposition}\label{Pp:PhiViaM}
  Let $n \ge 4$, $1 \le i, j \le n-1$ and
  \begin{equation}\label{Eq:PhiViaM}
    \rho_{ij} \bydef \frac12 (x_{(1i)(1n),(1j)(2,n-1)(2n)} -
    x_{(1i),(1j)(2,n-1)(2n)} + x_{(1i)(1n),(1j)(2n)} -
    x_{(1i),(1j)(2n)})
  \end{equation}
  Then $\Theta_n(\rho_{ij}) = \BM_{ij} \in M_n$.
\end{proposition}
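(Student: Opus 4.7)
The plan is to exploit the rank-$1$ structure of differences of permutation matrices.  Writing $\sigma_1 = (1i)(1n)$, $\sigma_2 = (1i)$, $\tau_1 = (1j)(2,n-1)(2n)$ and $\tau_2 = (1j)(2n)$, the formula $M_{(\sigma,\tau)} = \mP[\sigma^{-1}] M_0 \mP[\tau]$ from \eqref{eq:ActionMatr} — where $M_0 := M_{(\id,\id)}$ — combined with the linearity of $\Theta_n$ gives
\begin{equation*}
  2\,\Theta_n(\rho_{ij}) = \bigl(\mP[\sigma_1^{-1}] - \mP[\sigma_2^{-1}]\bigr)\, M_0\, \bigl(\mP[\tau_1] + \mP[\tau_2]\bigr),
\end{equation*}
so it suffices to evaluate this triple product and show it equals $2\BM_{ij}$.

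The first observation is that $\sigma_1^{-1}$ and $\sigma_2^{-1}$ agree on $\{1,\ldots,n\}\setminus\{i,n\}$ (both send $1$ to $i$), so $A := \mP[\sigma_1^{-1}] - \mP[\sigma_2^{-1}]$ has nonzero entries only in rows $i$ and $n$.  A brief direct check confirms $A = (e_i - e_n)(e_n - e_1)^T$ as a rank-$1$ outer product, where $e_k$ denotes the $k$-th standard basis column vector.  Reading off rows $1$ and $n$ of $M_0$ from \eqref{eq:matrix} then gives $(e_n - e_1)^T M_0 = (e_1 - e_2 - e_{n-1} + e_n)^T$, so that $AM_0 = (e_i - e_n)(e_1 - e_2 - e_{n-1} + e_n)^T$ is still of rank $1$.

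The final and crucial step is to compute $(e_1 - e_2 - e_{n-1} + e_n)^T (\mP[\tau_1] + \mP[\tau_2])$.  Using the identity $e_k^T \mP[\tau] = e_{\tau(k)}^T$, this reduces to a signed sum of eight unit vectors $e_{\tau_m(k)}$ for $m \in \{1,2\}$ and $k \in \{1, 2, n-1, n\}$.  A direct evaluation of $\tau_1$ and $\tau_2$ at these four indices — done once for generic $j$ and then verified in each of the degenerate cases $j \in \{1, 2, n-1\}$ where the transpositions composing $\tau_1$ or $\tau_2$ have overlapping support — yields in every case the telescoped sum $2(e_j - e_n)^T$.  Substituting back gives $\Theta_n(\rho_{ij}) = (e_i - e_n)(e_j - e_n)^T = \BM_{ij}$.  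The main obstacle is precisely this case analysis: the individual values $\tau_m(k)$ in the degenerate situations differ from the generic pattern, but the pairwise symmetry between $\tau_1$ and $\tau_2$ forces the intermediate contributions at $e_2$ and $e_{n-1}$ to cancel, leaving the same uniform final answer $2(e_j - e_n)^T$ throughout.
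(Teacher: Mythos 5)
Your proof is correct, but it takes a genuinely different route from the paper's. The paper verifies only the single case $i=j=1$ by direct inspection and then gets the general case by equivariance: $\rho_{ij} = \mR[((1i),(1j))]\,\rho_{11}$ under the regular representation of $S_n\times S_n$, and $\Theta_n$ intertwines this action with $X \mapsto \mP[(1i)]\,X\,\mP[(1j)]$, so $\Theta_n(\rho_{ij}) = \mP[(1i)]\,\BM_{11}\,\mP[(1j)] = \BM_{ij}$. You instead compute $\Theta_n(\rho_{ij})$ uniformly for all $i,j$ by factoring $2\Theta_n(\rho_{ij}) = \bigl(\mP[\si_1^{-1}]-\mP[\si_2^{-1}]\bigr) M_0 \bigl(\mP[\tau_1]+\mP[\tau_2]\bigr)$ and using that the left factor is the rank-one matrix $(e_i-e_n)(e_n-e_1)^T$, after which the right multiplication telescopes to $2(e_j-e_n)^T$. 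I verified your computation in the generic case and in the degenerate cases $j\in\{1,2,n-1\}$ and $i=1$; it checks out, and it has the merit of explaining \emph{why} the particular four-term combination \eqref{Eq:PhiViaM} works, which the paper's ``immediate check'' leaves opaque. The trade-off is that the paper's argument is shorter once the equivariance of $\Theta_n$ is granted, while yours replaces the unproved base case by an explicit computation valid for all $i,j$ at once, at the cost of a small case analysis. One caveat common to both arguments: the products of transpositions in the subscripts of \eqref{Eq:PhiViaM} must be read with the left factor acting last (functional composition); with the opposite convention the left factor would instead be $(e_1-e_n)(e_n-e_i)^T$ and the identity would fail for $i\neq 1$. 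You implicitly, and correctly, adopt the convention under which the proposition is true.
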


\begin{proof}
  For $i=j=1$ the proof is an immediate check.
  For any other $i$ and $j$, one has
  \begin{equation*}
    \rho_{ij} = \mR[((1i),(1j))] \rho_{11},
  \end{equation*}
  where $\mR$ is a regular representation of $S_n \times S_n$ in
  $\bC[x_{(\si,\tau)}, \si, \tau \in S_n]$ given by
  \begin{equation*}
    \mR[(\si',\tau')](x_{\si, \tau}) = x_{\si' \si, \tau'\tau}.
  \end{equation*}
  Recall that by $\mP$ we denote a $n$-dimensional permutation
  representation of $S_n$; then one has
  \begin{equation*}
    \Theta_n(\rho_{ij}) = \mP[(1i)] \Theta_n(\rho_{11}) \mP[(1j)] =
    \mP[(1i)] \BM_{11} \mP[(1j)] = \BM_{ij}.
  \end{equation*}
\end{proof}

The kernel $J_n\bydef \name{Ker}(\Theta_n) \subset \bC[S_n\times S_n]$
is an ideal which we call the {\em ideal of relations}. Obviously
$J_n$ is a homogeneous ideal: $J_n = \bigoplus_k J_{nk}$ where $J_{nk}
\bydef \name{Ker} \left.\Theta_n\right|_k$ is the kernel of $\Theta_n$
restricted to the degree $k$ component of the polynomial ring
$\bC[x_{(\si,\tau)}, \si, \tau \in S_n]$.

The condition $x = \sum_{\si, \tau \in S_n} u_{\si,\tau} x_{\si,\tau}
\in J_{n1}$ means that for any $i = 1 \DT, n$, one has
\begin{align*}
  0 &= \Theta(x)(e_i) = \sum_{\si, \tau \in S_n} u_{\si,\tau}
  \mP[\si^{-1}(C - C^{-1}](e_{\tau(i)})\\
  &= \sum_{\si, \tau \in S_n} u_{\si,\tau} \mP[\si^{-1}]
  (e_{\tau(i)+1} - e_{\tau(i)-1}) \\  
  &\text{ (meaning addition and subtraction modulo $n$)}\\
  &= \sum_{\si, \tau \in S_n} u_{\si,\tau} u_{\si,\tau}
  (e_{\si^{-1}(\tau(i)+1)} - e_{\si^{-1}(\tau(i)-1)}).
\end{align*}
In other words, this equality means that for all $i, j = 1 \DT, n$, one
has
\begin{equation}\label{Eq:LinRel}
  \sum_{\si,\tau: \si(j) = \tau(i)+1} u_{\si,\tau} = \sum_{\si,\tau: \si(j) = \tau(i)-1} u_{\si,\tau}.
\end{equation}

Propositions \ref{prop:subalgebra} and
\ref{Pp:PhiViaM} imply the following. 

\begin{corollary}\label{cor:another basis}
  The ideal of relations is generated by all linear elements $x =
  \sum_{\si,\tau \in S_n} u_{\si,\tau} x_{\si,\tau}$, where the 
  coefficients $u_{\si,\tau}$ satisfy equations \eqref{Eq:LinRel} and
  the quadratic elements $\rho_{i_1,j_1}\rho_{i_2,j_2} -
  \rho_{i_1,j_2}\rho_{i_2,j_1}$, where the elements $\rho_{ij}$ are defined
  by equation \eqref{Eq:PhiViaM}.
\end{corollary}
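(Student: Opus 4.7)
The plan is to decompose $J_n = \ker \Theta_n$ into its linear and higher-degree content, and to reduce the analysis of degrees $\ge 2$ to a computation in the quotient ring $\bC[x_{(\si,\tau)}]/\langle J_{n,1}\rangle$, which will turn out to be a polynomial ring on the elements $\rho_{ij}$ of \eqref{Eq:PhiViaM}. The two families in the corollary will then appear as, respectively, the linear kernel $J_{n,1}$ itself and the ideal of $2\times 2$ minors supplied by Proposition \ref{prop:subalgebra}.

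First I would identify the linear piece $J_{n,1}$. A degree-one element $x = \sum_{\si,\tau} u_{\si,\tau} x_{(\si,\tau)}$ lies in $\ker \Theta_n$ precisely when $\sum u_{\si,\tau} M_{(\si,\tau)} = 0$ as a bilinear form. The matrix computation displayed just before the statement of the corollary rewrites this vanishing as the system \eqref{Eq:LinRel} for all $i, j$; hence $J_{n,1}$ is exactly the first family listed in the corollary.

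Next I would pass to the quotient $Q := \bC[x_{(\si,\tau)}]/\langle J_{n,1}\rangle$. Since $J_{n,1}$ is a linear subspace of the degree-one part, $Q$ is a polynomial ring on any complementary basis. By Lemma \ref{cor:dim}, $\dim \fM_n = (n-1)^2$, and by Proposition \ref{Pp:PhiViaM} the $(n-1)^2$ elements $\rho_{ij}$ have images $\BM_{ij}$ forming a basis of $\fM_n$; hence their classes $\bar\rho_{ij}$ are linearly independent in $Q$ and, by dimension count, constitute a basis of the degree-one piece. This yields the identification $Q \simeq \bC[\bar\rho_{ij} : 1\le i,j\le n-1]$, under which the induced map $Q \to \A_n$ sends $\bar\rho_{ij} \mapsto \BM_{ij} = (z_i-z_n)(\bar z_j-\bar z_n)$. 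This matches the Segre presentation of $\A_n$ from Proposition \ref{prop:subalgebra} via $d_{ij}\leftrightarrow \BM_{ij}$, so the kernel of the induced map is generated by the $2\times 2$ minors $\bar\rho_{i_1,j_1}\bar\rho_{i_2,j_2} - \bar\rho_{i_1,j_2}\bar\rho_{i_2,j_1}$; lifting back to $\bC[x_{(\si,\tau)}]$ produces the second family.

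The main (and modest) obstacle is precisely this identification of $Q$ with a polynomial ring on the $\bar\rho_{ij}$: one must confirm that the $(n-1)^2$ classes $\bar\rho_{ij}$ are not only linearly independent, which follows from Proposition \ref{Pp:PhiViaM}, but also exhaust a complement of $J_{n,1}$ in the degree-one part, which follows from the surjection $\bC[x_{(\si,\tau)}]_1\twoheadrightarrow\fM_n$ together with $\dim\fM_n=(n-1)^2$. Once this identification is in place, the corollary is a direct consequence of Propositions \ref{prop:subalgebra} and \ref{Pp:PhiViaM}.
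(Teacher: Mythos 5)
Your proof is correct and follows the same route the paper intends: the corollary is stated there as an immediate consequence of Propositions \ref{prop:subalgebra} and \ref{Pp:PhiViaM}, and your argument is precisely the standard filling-in of that deduction (the linear kernel $J_{n,1}$ is cut out by \eqref{Eq:LinRel}, the quotient by $\langle J_{n,1}\rangle$ is a polynomial ring on the classes $\bar\rho_{ij}$ since the $\BM_{ij}$ form a basis of $\fM_n$, and the remaining relations are the $2\times 2$ minors via the Segre presentation). The only caveat, inherited from the statement itself rather than from your argument, is that formula \eqref{Eq:PhiViaM} for the $\rho_{ij}$ is only valid for $n\ge 4$, so the case $n=3$ requires the modified expressions given in Section~\ref{sec:triangles}.
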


\begin{corollary}[of Corollary \ref{cor:another basis}]
  The Galois group of the Galois closure of the field extension
  $\bC[\zz,\bzz]^{S_n \times S_n}(\nu_2): \bC[\zz,\bzz]^{S_n \times
    S_n}$ consists of all maps $\gamma: S_n \times S_n \to S_n \times
  S_n$ such that the linear transformation sending $x_{\si,\tau}
  \mapsto x_{\gamma(\si,\tau)}$ for all $\si, \tau \in S_n$ preserves
  all the relations described in Corollary \ref{cor:another basis}.
\end{corollary}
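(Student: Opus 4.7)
The plan is to prove both inclusions in the claimed equality. Set $K \bydef \bC(\zz,\bzz)^{S_n \times S_n}$ and $L \bydef K(\{{\nu_2}_{(\si,\tau)} : \si,\tau \in S_n\})$; by construction $L$ is the splitting field of $P(t)$ over $K$, hence the Galois closure of the extension $K \subset K(\nu_2)$. Every $\phi \in \name{Gal}(L/K)$ is determined by the permutation it induces on the roots of $P(t)$, which are distinct by Lemma~\ref{Lm:AllSimple} and indexed by the cosets of $\cG$ in $S_n \times S_n$. I would encode this permutation by a set-theoretic lift $\gamma : S_n\times S_n \to S_n \times S_n$ that is constant on each $\cG$-coset.

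For the necessity direction, the argument is essentially formal. Since $\phi$ is a $K$-algebra homomorphism and in particular fixes every coefficient in $\bC$, every polynomial identity $R({\nu_2}_{(\si,\tau)}) = 0$ with $R \in J_n = \name{Ker}(\Theta_n)$ becomes, after applying $\phi$, the identity $R({\nu_2}_{\gamma(\si,\tau)}) = 0$. Equivalently, the $\bC$-linear substitution $x_{(\si,\tau)} \mapsto x_{\gamma(\si,\tau)}$, extended to the polynomial ring $\bC[x_{(\si,\tau)}]$, preserves the ideal $J_n$; by Corollary~\ref{cor:another basis} it therefore preserves the explicit linear and quadratic generators listed there.

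For the sufficiency direction, suppose conversely that $\gamma$ preserves those generators, and hence the whole ideal $J_n$. Then $\gamma$ descends to a $\bC$-algebra automorphism $\bar\gamma$ of $\A_n = \bC[x_{(\si,\tau)}]/J_n$, which extends uniquely to an automorphism of $\name{Frac}(\A_n) = \bC(\{{\nu_2}_{(\si,\tau)}\}) \subset \bC(\zz,\bzz)$. Since $L = K \cdot \name{Frac}(\A_n)$, to produce an element of $\name{Gal}(L/K)$ I would verify that $\bar\gamma$ acts trivially on the intersection $K \cap \name{Frac}(\A_n)$; but that intersection consists of rational functions of $\{{\nu_2}_{(\si,\tau)}\}$ which are $S_n\times S_n$-invariant, i.e., symmetric functions of the roots of $P(t)$, and are therefore tautologically fixed by the root permutation $\bar\gamma$.

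The main obstacle I expect lies precisely in this compositum step: justifying rigorously that $\bar\gamma$ extends to an element of $\name{Gal}(L/K)$ requires a clean description of $K \cap \name{Frac}(\A_n)$ together with linear disjointness of $K$ and $\name{Frac}(\A_n)$ over it. Everything else --- the counting of Galois elements and the verification that $\gamma \mapsto \phi_\gamma$ is a group homomorphism --- follows formally from the explicit presentation of $J_n$ in Corollary~\ref{cor:another basis}.
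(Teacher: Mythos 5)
Your necessity direction is fine, and it is essentially all the paper itself offers: the text opening Section~\ref{sec:galois} simply declares that ``by definition'' the Galois group is the subgroup preserving all the dependencies between the ${\nu_2}_{(\si,\tau)}$, and the corollary is then stated with no further argument. An element of $\name{Gal}(L/K)$ is a ring homomorphism fixing $\bC$ and permuting the roots, so it maps every element of $J_n=\name{Ker}\Theta_n$ to another relation; that much is formal and matches the paper's implicit reasoning.

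The genuine gap is in your sufficiency direction, and your own suspicion about the compositum step is exactly right --- but your proposed repair does not work, and in fact this half cannot be established by your route. Writing $F\bydef\name{Frac}(\A_n)=\bC(\{{\nu_2}_{(\si,\tau)}\})$, the field $F\cap K$ consists of rational functions of the roots invariant under the \emph{specific} transitive action of $S_n\times S_n$ on the cosets of $\cG$; this is not the same as being a symmetric function of the roots, and a $J_n$-preserving $\bar\gamma$ need not lie in the image of $S_n\times S_n$ inside the permutation group of the roots, hence need not fix $F\cap K$ pointwise. (Note $F\cap K\supsetneq\bC$: it contains, for instance, the coefficients of $P(t)$.) A dimension count makes the failure concrete for $n=3$: all roots lie in $\bC(\zz,\bzz)$ and $[\bC(\zz,\bzz):K]=(3!)^2=36$ by Artin's theorem, so $|\name{Gal}(L/K)|=[L:K]\le 36$; yet Section~\ref{sec:triangles} exhibits a group of order $288$ of signed permutations preserving every relation listed in Corollary~\ref{cor:another basis}. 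Therefore most relation-preserving $\gamma$ do \emph{not} extend to $K$-automorphisms of $L$; only the inclusion ``Galois group $\subseteq$ relation-preserving maps'' holds, and an equality would require preserving the ideal of relations with coefficients in $K$ (which strictly contains the extension of $J_n$), not merely the $\bC$-coefficient relations of Corollary~\ref{cor:another basis}. So the obstacle you flagged --- linear disjointness over a correctly described $F\cap K$ --- is not a technicality to be smoothed over but the precise point at which the asserted equality breaks down.
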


\section{Examples and illustrations: triangle}\label{sec:triangles}

In this section we illustrate our general results in the
simplest nontrivial case $n=3$, i.e.\ when the considered polygons are triangles.

\subsection{$S_3$-action and the Galois group}\label{Sec:Triangle}

First of all, for $n=3$, the numerator of equation \eqref{eq:PsiN} is
a constant, so it is equal to $\nu_2 = \nu_2(\zz,\bzz)$. Therefore,
 harmonic moments of a triangle are related as
\begin{equation}\label{Eq:RelM3}
  \frac{\nu_{j+2}}{\nu_2} = \binom {j+2}{2} h_j(z_1,z_2,z_3),
\end{equation}
where $h_j(z_1,z_2,z_3)$ denotes the complete symmetric function of
degree $j$ in three variables, that is, the sum of all monomials of
degree $j$ in $z_1, z_2, z_3$. So $\nu_2$ will be playing a crucial
role in the following considerations. Denote $M \bydef \nu_2$ for
short (the same thing was $M_{\id,\id}$ in Section \ref{Sec:Both}: we
again do not distinguish bilinear forms from their matrices).

\begin{theorem}\label{th:triangle}
  The generators $M = \nu_2, \nu_3, \bar \nu_3, \nu_4, \bar \nu_4, \nu_5,
  \bar \nu_5$ of the field $\widetilde \F_3$ satisfy a sole
  relation $L(M, e_1 \DT, \bar e_3) = 0$,  where $L \bydef
  \name{Res}_S(R,Q)$.   Here $R = 16M^2 + \det
  \Omega(S)$ with
  \begin{equation*}
    \Omega(S) = \begin{pmatrix}
      3 & e_1 & \bar e_1\\
      e_1 & e_1^2-2e_2 & S\\
      \bar e_1 & S &\bar e_1^2- 2\bar e_2
    \end{pmatrix}
  \end{equation*}
  and 
  \begin{equation*}
    Q = \prod_{\si \in S_3} \left(S-z_1\bar z_{\si(1)}-z_2\bar z_{\si(2)}-z_3\bar z_{\si(3)}\right).
  \end{equation*}
\end{theorem}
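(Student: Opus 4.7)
The plan is to recognize $L$ as a nonzero scalar multiple of the minimal polynomial of $\nu_2$ over the field $H := \bC(e_1,e_2,e_3,\bar e_1,\bar e_2,\bar e_3)$. By Theorem~\ref{Th:Both} applied to $n=3$ (odd), the $e_j$ and $\bar e_j$ are algebraically independent over $\bC$ and $[\widetilde \F_3 : H] = d_3 = 3!\cdot 2! = 12$; hence the ideal of algebraic relations among the seven quantities $M, e_1,\dots,\bar e_3$ is principal, generated by a unique polynomial (up to scalars) of $M$-degree $12$. It therefore suffices to verify (i)~that $L$ vanishes on the parametrization, and (ii)~that $L$ has $M$-degree exactly $12$ and is not identically zero.

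The key geometric identity is that $\Omega(S)$, specialized to $S = T := z_1\bar z_1 + z_2\bar z_2 + z_3\bar z_3$, is the Gram matrix of $v_0 := (1,1,1)$, $v_1 := \zz$, $v_2 := \bzz$ under the standard symmetric bilinear pairing on $\bC^3$: one reads off $\langle v_0,v_0\rangle = 3$, $\langle v_0,v_1\rangle = e_1$, $\langle v_1,v_1\rangle = e_1^2-2e_2$, $\langle v_1,v_2\rangle = T$, and analogously for the remaining entries. Hence
\[ \det\Omega(T) \;=\; \bigl(\det[v_0\;v_1\;v_2]\bigr)^{2}, \]
and expanding the $3\times 3$ determinant in conjunction with formula~\eqref{eq:expl} for $k=2$ gives $\det[v_0\;v_1\;v_2] = 4i\nu_2$, so $\det\Omega(T) = -16\nu_2^2$ and $R(\nu_2,T) = 0$. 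Since $T$ is the value at $\si = \id$ of the bilinear form $T_\si := \sum_j z_j\bar z_{\si(j)}$ appearing as a root of $Q$, the polynomials $R(M,S)$ and $Q(S)$ share the root $S=T$ after substitution, and so $L = \name{Res}_S(R,Q)$ vanishes, proving (i).

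For (ii), the Poisson product formula (applicable since $Q$ is monic in $S$ of degree $6$) yields
\[ L \;=\; \prod_{\si\in S_3} R(M, T_\si) \;=\; \prod_{\si\in S_3}\bigl(16 M^2 + \det\Omega(T_\si)\bigr), \]
and the same Gram-matrix calculation with $v_2$ replaced by $v_2^\si := (\bar z_{\si(1)},\bar z_{\si(2)},\bar z_{\si(3)})$ gives $\det\Omega(T_\si) = -16\,{\nu_2}_{(\id,\si)}^2$, whence
\[ L \;=\; 16^6\prod_{\si\in S_3}\bigl(M - {\nu_2}_{(\id,\si)}\bigr)\bigl(M + {\nu_2}_{(\id,\si)}\bigr), \]
a product of $12$ linear factors in $M$. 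To match these with the Galois orbit, combine Proposition~\ref{Pp:11} (for $n=3$, $\cG = \langle(C,C)\rangle$ is cyclic of order $3$) with the sign identity ${\nu_2}_{(\xi,\xi)} = -\nu_2$ from the final remark of Section~\ref{Sec:Both}: a set of coset representatives of $\cG$ in $S_3\times S_3$ is $\{(\id,\tau)\}_{\tau\in S_3}\sqcup\{(\xi,\xi\tau)\}_{\tau\in S_3}$, giving conjugates $\{\pm{\nu_2}_{(\id,\tau)} : \tau\in S_3\}$, which are pairwise distinct for generic $(\zz,\bzz)$ by Lemma~\ref{Lm:AllSimple}. Thus $L$ has $M$-degree exactly $12$ with $12$ distinct linear factors matching the Galois conjugates, so it coincides with the minimal polynomial of $\nu_2$ over $H$ up to a nonzero constant. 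The main obstacle is the last step of bookkeeping the $12$ factors of $L$ against the $12$ cosets of $\cG$; once the sign flip from $(\xi,\xi)$ is invoked, the two batches of $6$ line up correctly.
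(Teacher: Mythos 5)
Your proof is correct and follows essentially the same route as the paper: the Gram-matrix identity $\omega\omega^{T}=\Omega(T)$ (Bryant's argument) gives $R(\nu_2,T)=0$, and since $T$ is a root of $Q$ the resultant vanishes. Where the paper merely asserts that the resultant has $M$-degree $12$ and invokes Theorem~\ref{Th:Both}, you make the minimality step explicit via the Poisson factorization $L=16^{6}\prod_{\si}(M-{\nu_2}_{(\id,\si)})(M+{\nu_2}_{(\id,\si)})$ and the matching of these $12$ roots with the coset representatives $\{(\id,\tau)\}\sqcup\{(\xi,\xi\tau)\}$ of $\cG$ --- a welcome elaboration of the paper's terse final two sentences, and entirely sound.
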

(Here $\name{Res}_S(R,Q)$ denotes the resultant of polynomials $R$ and
$Q$ with respect to the variable $S$).

\begin{remark}
  Explicitly, one has $R = -3S^2+2e_1\bar e_1 S+16M^2+e_1^2\bar
  e_1^2-4e_1^2\bar e_2-4\bar e_1^2 e_2+12e_2\bar e_2$. $Q$ is a
  polynomial of degree $6$ with respect to $S$; it is symmetric in the
  $z_i$ and the $\bar z_i$ separately. Hence $Q$ can be regarded as a
  polynomial of degree $6$ in $S$ with the coefficients being polynomials
  in  the variables $e_k$ and $\bar e_k$, $k = 1, 2, 3$. The total degree
  of $Q$ is $20$; it contains $66$ terms.
\end{remark}

\begin{proof}[Proof of Theorem~\ref{th:triangle}]
By Theorem~\ref{Th:Formula},  $M = \frac{1}{2i} \det
  \omega$, where
  \begin{equation}\label{Eq:DetNu2}
    \omega = \begin{pmatrix}
      1 & 1 & 1\\
      z_1 & z_2 & z_3\\
      \bar z_1 & \bar z_2 & \bar z_3
    \end{pmatrix}.
  \end{equation}

  We will follow the argument suggested by R.\,Bryant in
  \cite{br}. One has $\omega \cdot \omega^* = \Omega(S)$, where $S =
  z_1\bar z_1+z_2\bar z_2+z_3\bar z_3$. Thus $16M^2 = -\det \Omega(S)$
  for this value of $S$.  The same value of $S$ is a root of the
  polynomial $Q$, so $L = \name{Res}_S(R,Q) = 0$. An explicit formula
  for the resultant shows that $\name{Res}_S(R,Q)$ has degree $12$
  with respect to $M$. Theorem~\ref{Th:Both} implies that $L$ is the
  minimal polynomial for $M$.
  \end{proof}

\begin{remark}
  Combining Theorem \ref{th:triangle} with equations \eqref{Eq:RelM3}, 
  one obtains a relation among $\splitatcommas{\nu_2, \nu_3, \nu_4, \nu_5,
    \bar \nu_3, \bar \nu_4, \bar \nu_5}$. We calculated it explicitly using
  {\em Macaulay} computer algebra system. The result is a very long
  polynomial with integer coefficients (of the order of several
  millions) which is weighted homogeneous of degree $64$ with $\nu_k$
  and $\bar \nu_k$ having weight $k$ for $k = 2, 3, 4, 5$.
\end{remark}
  
Let us now present the relations between $M_{(\si,\tau)}$. By equation
\eqref{Eq:MinimalPol} one should take one pair $(\si,\tau)$ for every
right coset of $S_3 \times S_3$ with respect to the cyclic group
generated by $(C,C)$, where $C$ is the cyclic shift $(123)$. The
number of these cosets is $3!2!  = 12$, and a convenient system of
representatives is $\{(\si,\tau) \mid \si \in \{\id, (12)\}, \tau \in
S_3\}$.

The vector space spanned by $M_{(\si,\tau)}$ has dimension $(3-1)^2 =
4$. So, there exist $12-4=8$ independent linear relations between
$M_{(\si,\tau)}$. Of them, $6$ are two-term:
\begin{equation}\label{Eq:2term}
  M_{(12),(12)\tau} + M_{\id,\tau} = 0, \quad \tau \in S_3, 
\end{equation}
and the additional two  are three-term:
\begin{equation}\label{Eq:3term}
  \begin{aligned}
    &M_{\id,\id} + M_{\id,(123)} + M_{\id,(132)} = 0, \\
    &M_{(12),\id} + M_{(12),(123)} + M_{(12),(132)}
    = 0.
  \end{aligned}
\end{equation}

The basis in the image of the map $\Theta_3$ is formed by $4$ vectors,
$\BM_{11}$, $\BM_{12}$, $\BM_{21}$, and $\BM_{22}$. For $n=3$, all
quadratic relations \eqref{Eq:Rel} reduce to only one:
\begin{equation}\label{Eq:Quadr3}
  \BM_{12} \BM_{21} = \BM_{11} \BM_{22}.
\end{equation}

Direct computation shows that for $n=3$ the forms $\BM_{ij}$ can be
expressed via $M_{\si,\tau}$ as follows (recall that the general
formulas \eqref{Eq:PhiViaM} work only for $n \ge 4$):
\begin{align*}
  \BM_{11} &= \frac13 (M_{\id,\id} + 2M_{\id,(123)} - M_{\id,(12)} -
  M_{\id,(23)}),\\
  \BM_{12} &= \frac13 (2M_{\id,\id} + M_{\id,(123)} - M_{\id,(12)} -
  2M_{\id,(23)}),\\
  \BM_{21} &= \frac13 (-M_{\id,\id} + M_{\id,(123)} - M_{\id,(12)} -
  2M_{\id,(23)}),\\
  \BM_{22} &= \frac13 (M_{\id,\id} + 2M_{\id,(123)} - 2M_{\id,(12)} -
  M_{\id,(23)}).
\end{align*}
Substitution of these formulas into the quadratic relation
\eqref{Eq:Quadr3} gives
\begin{equation}\label{Eq:QuadrRel}
  M_{\id,\id}^2 + M_{\id,\id}M_{\id,(123)} + M_{\id,(123)}^2 =
  M_{\id,(12)}^2 + M_{\id,(12)}M_{\id,(23)} + M_{\id,(23)}^2.
\end{equation}

The Galois group $G_3$ of the equation \eqref{Eq:MinimalPol} permutes
its $12$ roots $\splitatcommas{M_{\id,\tau}, M_{(12),\tau}, \tau \in
  S_3}$ preserving the linear relations \eqref{Eq:2term} and
\eqref{Eq:3term} together with the quadratic relation
\eqref{Eq:QuadrRel}. Thus $G_3 \subset S_{12}$.

For $\ga \in G_3$, relations \eqref{Eq:2term} imply that
there exists a bijection $\tilde\ga: S_3 \to S_3$ and a map $\eps: S_3
\to \{1, -1\}$ such that $\ga(M_{\id,\tau}) = \eps[\tau]
M_{\id,\tilde\ga[\tau]}$ for all $\tau \in S_3$. Then it follows  from
\eqref{Eq:2term} that $\ga(M_{(12),\tau}) = -\eps[\tau]
M_{\id,(12)\tilde\ga[\tau]} = \eps[\tau] M_{(12),\tilde\ga[\tau]}$ which means that 
the bijection $\tilde\ga \in S_6$ and the map $\eps$ determine $\ga$
uniquely. In other words, $G_3$ is a subgroup of the Coxeter group
$B_6$ of signed permutations (which is naturally embedded into
$S_{12}$, as described above).

Further, to preserve relations \eqref{Eq:3term} the map $\tilde\ga$
should either map the subsets $A_3 \bydef \{\id,(123),(132)\}$ and
$S_3 \setminus A_3 = \{(12),(13),(23)\}$ of $S_3$ to themselves or to
each other. In both cases, the numbers $\eps[\tau]$ should remain the
same while $\tau$ is changing within a set. The pairs $(\tilde\ga,
\eps) \in G_3$ where $\tilde\ga$ preserves the sets form a subgroup
$G_3^+ \subset G_3$ of index $2$.

Notice now that the quadratic form $Q(u) = u_1^2 + u_1 u_2 + u_2^2$ is
$S_3$-invariant on the subspace $V_3 \bydef \{u_1 e_1 + u_2 e_2 + u_3 e_3
\mid u_1 + u_2 + u_3 = 0\} \subset \bC^3 = \langle e_1, e_2,
e_3\rangle$ with the permutation action of the $S_3$. (This can be
checked by an easy computation; actually, up to a
factor, this form is equal to the restriction of the form $u_1^2 + u_2^2 +
u_3^2$ defined in $\bC^3$ to $V_3$). So, any mapping $\ga$ described above
automatically preserves relation \eqref{Eq:QuadrRel}. Hence, the
subgroup $G_3^+$ consists of {\em all} pairs $(\tilde\ga, \eps)$ where
$\tilde\ga$ preserves the sets $A_3$ and $S_3 \setminus A_3$ and
$\eps$ is constant within either set; thus, $G_3^+$ is isomorphic to
the group $S_3 \times S_3 \times \bZ_2 \times \bZ_2$ and contains
$144$ elements. The whole group $G_3$ contains $288$ elements and is a
semi-direct product of $G_3^+$ and the $2$-element group $\bZ_2$.

\subsection{Graphic presentation of the moment $M=\nu_2$}

It follows from \eqref{Eq:RelM3} that to analyze the moments for $n=3$
it is enough to study the lowest moment $\nu_2$ defined by the equation
\eqref{Eq:DetNu2}.

To represent the points $(x_1,y_1), (x_2,y_2), (x_3,y_3) \in \bC^2$,
let us draw two triples of complex numbers: $\zz = (z_1, z_2, z_3)$
and $\bzz = (\bar z_1, \bar z_2, \bar z_3)$ where $z_j = x_j + iy_j$
and $\bar z_j = x_j - iy_j$, $j = 1, 2, 3$. For generic choice of
$z_j$, $\bar z_j$ there exist unique numbers $\alpha, \beta \in \bC$
such that $\alpha z_1 + \beta = \bar z_1$ and $\alpha z_2 + \beta =
\bar z_2$ and therefore
\begin{equation}\label{Eq:AreaExpl}
  \nu_2 = \frac{1}{2i} \det \begin{pmatrix}
    1 & 1 & 1 \\
    z_1 & z_2 & z_3\\
    0 & 0 & \bar z_3 - (\alpha z_3 + \beta)
  \end{pmatrix} = \frac{1}{2i}(\bar z_3 - (\alpha z_3 + \beta))(z_2-z_1).
\end{equation}
So if $w = \alpha z_3 + \beta$ then the triangle $\bar z_1 \bar z_2 w$
is similar to the triangle $z_1z_2z_3$ where the similarity map sends
$z_1 \mapsto \bar z_1, z_2 \mapsto \bar z_2, z_2 \mapsto
w$. Obviously, this condition determines $w$ uniquely. Then the vector
connecting $\bar z_3$ and $w$ represents the complex number $\bar z_3
- (\alpha z_3 + \beta)$. Thus it follows from \eqref{Eq:AreaExpl} that
the moment $\nu_2$ is the product of this number by the complex number
represented by the vector joining the vertices $z_1$ and $z_2$,
divided by $2i$. In particular, $\lmod\nu_2\rmod$ is one half of the
product of the lengths of these two vectors. Thus, $\nu_2$ can be
thought as a measure of non-similarity of two triangles.

The action of the group $S_3 \times S_3$ preserves triples $\zz$ and
$\bzz$, but changes the numbering of these points. Identity
\eqref{Eq:3term} now involves moments $\nu_2$ calculated using
\eqref{Eq:AreaExpl} with the same $z_1, z_2, z_3$ in all three terms
and $\bar z_1, \bar z_2, \bar z_3$ changing their labels in a
cycle.

If the vertices of the triangle are real then \eqref{Eq:3term}
translates into a statement from the elementary Euclidean
geometry. Namely, denote by $A_j$ the point $\bar z_j \in \bC = \bR^2$
and by $C_j$, the point $\alpha z_j + \beta$ from \eqref{Eq:AreaExpl};
here $j = 1,2,3$. Then \eqref{Eq:3term} and \eqref{Eq:AreaExpl} give:

\begin{Theorem}
  Let $A_1A_2A_3$ be a triangle in the plane $\bR^2$. Let $C_1, C_2,
  C_3 \in \bR^2$ be points such that the triangles $A_1C_3A_2$,
  $A_2C_1A_3$ and $A_3C_2A_1$ are similar with the similarity maps
  sending vertices to vertices as written (e.g.\ $A_1 \mapsto A_2, A_2
  \mapsto A_3, C_3 \mapsto C_1$ for the first two triangles,
  etc.). Then the sum of the vectors
  $\widevec{A_1C_1}+\widevec{A_2C_2}+\widevec{A_3C_3}$ vanishes.
\end{Theorem}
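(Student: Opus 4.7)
The plan is to derive the geometric statement directly from the three-term linear relation \eqref{Eq:3term}, namely $M_{\id,\id} + M_{\id,(123)} + M_{\id,(132)} = 0$, by evaluating each summand via the explicit formula \eqref{Eq:AreaExpl} for $\nu_2$. The strategy is to match the geometric data $(A_j, C_j)$ with an algebraic configuration $(z_j, \bar z_j)$ so that the three $M_{\id,\tau}$ line up with the three deviation vectors $A_j - C_j$.

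After identifying $\bR^2 = \bC$, pick a reference triangle $z_1 z_2 z_3 \subset \bC$ whose shape matches the common similarity class of $A_1A_2C_3$, $A_2A_3C_1$, and $A_3A_1C_2$, with the correspondence $z_1 \leftrightarrow A_1$, $z_2 \leftrightarrow A_2$, $z_3 \leftrightarrow C_3$. (Concretely, one may simply take $z_1=A_1$, $z_2=A_2$, $z_3=C_3$.) Declare $\bar z_j := A_j$ as formal independent parameters: since \eqref{Eq:3term} holds polynomially in $(\zz, \bzz)$, one need not impose $\bar z_j = \overline{z_j}$. For each $\tau \in \{\id, (123), (132)\}$, let $f_\tau(z) = \alpha_\tau z + \beta_\tau$ be the complex-affine map sending $z_j \mapsto \bar z_{\tau(j)} = A_{\tau(j)}$ for $j = 1, 2$. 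The theorem's hypothesis that the three triangles are mutually directly similar with the prescribed vertex correspondences translates to exactly the three identities $f_\id(z_3) = C_3$, $f_{(123)}(z_3) = C_1$, and $f_{(132)}(z_3) = C_2$.

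Next, repeating the row-reduction that produced \eqref{Eq:AreaExpl}, but with the third row of $\omega$ replaced by $(\bar z_{\tau(1)}, \bar z_{\tau(2)}, \bar z_{\tau(3)})$, yields
\begin{equation*}
  M_{\id,\tau} = \frac{1}{2i}\bigl(A_{\tau(3)} - f_\tau(z_3)\bigr)(z_2 - z_1),
\end{equation*}
which for the three values of $\tau$ evaluates respectively to $\tfrac{z_2 - z_1}{2i}(A_3 - C_3)$, $\tfrac{z_2 - z_1}{2i}(A_1 - C_1)$, and $\tfrac{z_2 - z_1}{2i}(A_2 - C_2)$. Substituting into \eqref{Eq:3term} and dividing by the nonzero prefactor $(z_2 - z_1)/(2i)$ gives $(A_1 - C_1) + (A_2 - C_2) + (A_3 - C_3) = 0$, i.e.\ the vector sum $\widevec{A_1C_1} + \widevec{A_2C_2} + \widevec{A_3C_3}$ vanishes.

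The only real obstacle is permutation bookkeeping: one must verify carefully that the three ``third vertices'' $f_\tau(z_3)$ arising from the three cosets of $(C,C)$ really do coincide with the given $C_3, C_1, C_2$ in the cyclic order dictated by $\tau(3)$. This is forced by the precise matching of the vertex correspondences in the hypothesis, but the cycle indexing has to be traced side-by-side. A minor conceptual point worth flagging is that the substitution $\bar z_j := A_j$ is not generally consistent with $\bar z_j = \overline{z_j}$; this is harmless since \eqref{Eq:3term} was established as an identity in independent complex variables, so any specialization is permitted.
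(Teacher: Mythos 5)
Your proof is correct and follows exactly the route the paper intends: specialize the polynomial identity \eqref{Eq:3term} to $\bar z_j = A_j$ (treated as independent variables), evaluate each $M_{\id,\tau}$ by the row reduction behind \eqref{Eq:AreaExpl}, and cancel the common factor $(z_2-z_1)/2i$. In fact the paper only asserts that \eqref{Eq:3term} and \eqref{Eq:AreaExpl} ``give'' the theorem, so your careful tracking of the three affine maps $f_\tau$ and the cyclic indexing of the $C_j$ supplies precisely the bookkeeping the paper omits.
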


\vspace*{4cm}
\setlength{\unitlength}{0.25cm}
\thicklines
\begin{picture}(0,0)
  \put(0,0){\line(1,0){16}}
  \put(0,0){\line(1,3){4}}
  \put(4,12){\line(1,-1){12}}
  \put(0,0){\line(1,6){2}}
  \put(2,12){\line(1,0){2}}
  \put(4,12){\line(3,-2){12}}
  \put(16,4){\line(0,-1){4}}
  \put(0,0){\line(2,-1){4}}
  \put(4,-2){\line(6,1){12}}
  \put(-1,-1){$A_1$}
  \put(4,13){$A_2$}
  \put(17,0){$A_3$}
  \put(0,13){$C_3$}
  \put(17,4){$C_1$}
  \put(4,-4){$C_2$}
  \put(20,12){$A_1C_3A_2 \sim A_2C_1A_3 \sim A_3C_2A_1$}
  \put(20,9){$\widevec{A_1C_1} + \widevec{A_2C_2} + \widevec{A_3C_3} =0$}
  \put(20,0){Identity \eqref{Eq:3term} from the geometric point of view.}
\end{picture}
\vspace*{2cm}

\section{Further outlook}\label{sec:outlook}

\def \thesubsubsection {\arabic{subsubsection}}

\subsubsection{} According to assertion \ref{It:GenNu2} of
Theorem~\ref{Th:Both} each moment $\nu_j(\zz, \bzz)$ is a rational
function of $e_1(\zz) \DT, e_n(\zz), e_1(\bzz) \DT, e_n(\bzz)$, and
$\nu_2(\zz,\bzz)$. Is it possible to find these rational functions
explicitly?

\subsubsection{} The main motivation for the present paper comes from
a recent article \cite{KoShSt} by the third author (joint with C.~Kohn
and B.~Sturmfels) where general (not necessarily harmonic) moments for
convex polytopes were considered. In particular, \cite{KoShSt}
contains a complete description of relations between the axial moments
of such polytopes. A similar problem for fields and
rings of general moments is still widely open and is
apparently closely related to complicated questions about the ring of diagonal
harmonics defined in \cite{Haiman}.

\end{document}